\title{Double bubbles for immiscible fluids in $\RRR n$}
\author{Gary R. Lawlor}
\theoremstyle{plain}
\newtheorem{thm}{Theorem}[section]
\newtheorem{lem}[thm]{Lemma}
\newtheorem{cor}[thm]{Corollary}
\newtheorem{prop}[thm]{Proposition}
\theoremstyle{definition}
\newtheorem{defn}[thm]{Definition}
     \def \RRR {\mathbb{R}^}          \def \vvv {{\bf v}}    \def \ppp {{\bf p}}
\begin{document}

\maketitle
We use a new approach that we call unification to prove that standard weighted double bubbles in $n$-dimensional Euclidean space minimize immiscible fluid surface energy, that is, surface area weighted by constants.  The result is new for weighted area, and also gives the simplest known proof to date of the  (unit weight) double bubble theorem \cite{hhs}, \cite{hmrr}, \cite{br}.

As part of the proof we introduce a striking new symmetry argument for showing that a minimizer must be a surface of revolution.

\section{Introduction}

The double bubble problem in $\RRR 3$ and its variants have been a focus of research since about 1989.  The first published proof for the minimization property of a multiple bubble was done by an undergraduate group advised by Frank Morgan and headed by Joel Foisy \cite{fo}.  They proved that a standard double bubble in the plane (two overlapping disks separated by a circular arc or a line segment, with all bounding arcs meeting at $120^\circ$ angles) has the least perimeter required to separately enclose two given amounts of area.

Hass, Hutchings and Schlafly \cite{hhs} proved in 1995 that the least surface area required to separately enclose two equal amounts of volume is achieved by the standard double bubble.

A beautiful combination of symmetry and variational arguments, including ingeniously crafted variations, culminated in the triumph of Hutchings, Morgan, Ritor\'e and Ros \cite{hmrr} as they proved that in $\RRR 3$, standard double bubbles of unequal volumes also minimize surface area.  This required analyzing all equilibrium double bubble surfaces of revolution and eliminating those that could not be minimizers because of instability of the equilibrium or because a fragment of a bubble was too small.

Students of Morgan soon extended the result to higher dimensions, an enterprise that culminated in Reichardt's \cite{br} proof of the double bubble theorem in all dimensions.

The paper \cite{hmrr} also addresses the question of immiscible fluids, and proves that for certain volumes and for nearly-unit weights on surface area, the minimizers are standard.  This is the result that we extend in the present paper to all volume pairs and all weights in $n$ dimensions:

\vskip 0.1in
\noindent{\bf Theorem 8.1.} \emph{ Standard weighted double bubbles in $\RRR n$ all minimize weighted surface area among piecewise smooth boundaries of pairs of open regions with prescribed volume.  These minimizers are unique.}
\vskip 0.22in

\section{Acknowledgments}

The author gained valuable insight and momentum at the Workshop on
isoperimetric problems, space-filling, and soap bubble geometry in Edinburgh, Scotland, in March of 2012, and would like to express gratitude to the hosting International Centre for Mathematical Sciences and to the workshop organizers.

The author would also like to thank the referee for many helpful suggestions.  

\section{A new symmetry argument}

We will give a new argument (Proposition \ref{sym}) for why a minimizing double bubble must be a surface of revolution.  This argument is more robust than the previously known proof given in \cite{hmrr} in that it requires less knowledge of regularity of the minimizer.  

We first demonstrate the idea by giving a new proof of the regular isoperimetric theorem in $\RRR n$.

\begin{thm} \label{isoper}
The unique minimizer of surface area for enclosing a given volume in $\RRR n$ is a round ball.
\end{thm}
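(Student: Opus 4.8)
The plan is to follow the classical three-part strategy --- existence, symmetry, identification --- but to extract the symmetry as cheaply as possible, since that is the genuinely new ingredient. First I would establish existence of a minimizer $M = \partial\Omega$, where $\Omega$ is a region of prescribed volume, by the direct method: among sets of finite perimeter the perimeter functional is lower semicontinuous under $L^1$ convergence and its sublevel sets are compact, so a minimizer exists, and a standard truncation argument shows it is bounded with no superfluous components. I would deliberately keep the regularity input to a minimum here, recording only what the symmetry step actually consumes, in keeping with the paper's stated aim of demanding less regularity than \cite{hmrr}.

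Second, and this is the heart, I would prove that $M$ is symmetric across a hyperplane perpendicular to every direction. Fix a unit vector $u$. As a hyperplane $P_t = \{x : \langle x,u\rangle = t\}$ sweeps across $\Omega$, the volume on the positive side varies continuously from $0$ to $\mathrm{Vol}(\Omega)$, so by the intermediate value theorem some $P = P_{t_0}$ bisects the volume. Writing $A^+$ and $A^-$ for the area of $M$ on the two sides, reflecting the positive half across $P$ yields a competitor enclosing the correct volume with area $2A^+$, and symmetrically $2A^-$; minimality forces $A^+ = A^-$ and shows each reflected configuration is itself a minimizer. The delicate point is to upgrade this equality of areas into genuine reflection symmetry of $M$ about $P$, since a priori the reflected minimizer could carry a crease along $P$. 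This is exactly the step the paper isolates as its new symmetry (unification) argument, and I would invoke Proposition \ref{sym} in place of the classical appeal to orthogonal meeting of $M$ and $P$ followed by unique continuation for the constant-mean-curvature equation, precisely because it asks so little of the regularity of $M$ along the cutting hyperplane.

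Third, once $M$ is symmetric across a volume-bisecting hyperplane $P_u$ for each direction $u$, I would observe that every such hyperplane of symmetry passes through the centroid $c$ of $\Omega$: reflection across $P_u$ is an isometry carrying $\Omega$ to $\Omega$, hence it fixes the centroid, so $c \in P_u$. Thus $\Omega$ is invariant under reflection across a hyperplane through $c$ of every normal direction. Since such reflections generate the full orthogonal group about $c$ (Cartan--Dieudonn\'e), $\Omega$ is $O(n)$-invariant about $c$, hence radial; among radial regions of fixed volume the ball strictly minimizes perimeter, so $\Omega$ is a round ball centered at $c$, and the prescribed volume pins down its radius, giving uniqueness.

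The step I expect to be the main obstacle is the upgrade in the second paragraph --- converting the equal-area consequence of bisection into honest reflection symmetry of a merely area-minimizing boundary, without assuming strong regularity along the cut. Everything else is either soft (existence, the passage from $O(n)$-invariance to a ball) or a short computation; the novelty and the difficulty are concentrated in making the symmetrization rigorous, which is presumably what the unification argument of Proposition \ref{sym} is designed to supply.
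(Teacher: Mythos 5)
Your proposal has a genuine gap at exactly the point you yourself identify as the heart of the matter, and the device you propose to fill it with cannot do the job. You want to upgrade ``both reflected halves are minimizers'' to ``$M$ is reflection-symmetric across $P$,'' and you say you would invoke Proposition \ref{sym} for this. But Proposition \ref{sym} is logically downstream of Theorem \ref{isoper} in the paper --- its proof explicitly follows the ideas of the proof of Theorem \ref{isoper}, which is presented first precisely to demonstrate the technique --- so citing it here is circular. Moreover it proves a different statement by a different mechanism: rotational symmetry of a double bubble about an axis line, obtained by angular stretching, not reflection symmetry of a single region across each volume-bisecting hyperplane. Without it you are left with the classical difficulty that equality of the two half-areas only tells you the reflected configurations are minimizers, not that $M$ itself is symmetric; closing that gap classically requires regularity plus unique continuation for the constant-mean-curvature equation, which is precisely the machinery you set out to avoid.

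What your outline omits is the actual new ingredient of the paper's proof: the angular stretch. The paper never claims $M$ is symmetric across the bisecting hyperplanes a priori. Instead it iteratively bisects (each new hyperplane perpendicular to the previous ones) to isolate a piece $C'$ carrying $1/2^n$ of the volume and, by the reflection comparison, $1/2^n$ of the area, sitting in an orthant; it also checks $C'$ is connected by a dilation argument. It then applies the map $\theta\mapsto 2\theta$ about the axis of intersection of the cutting hyperplanes. The key inequality is that this stretch multiplies volume by exactly $2$ but multiplies surface area by strictly less than $2$ unless the surface is already a piece of a sphere centered on the axis; iterating the stretch and reflecting once produces a competitor with the original volume and strictly smaller area, contradicting minimality. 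This quantitative comparison between the stretch factors for volume and for area is what replaces both the symmetrization upgrade and the unique-continuation step, and it is entirely absent from your proposal. Your final step (all symmetry hyperplanes pass through the centroid, hence $O(n)$-invariance, hence a ball) is fine as far as it goes, but it rests on the symmetry you have not established.
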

\begin{proof}  The proof involves existence, bisection, angular stretch, and reflection.
The heart of the argument is the fact that an angular stretch multiplies volume by a factor at least as large as it multiplies surface area, and the inequality is strict except on a surface of revolution.

We will first do the proof in $\RRR 2$.  Given a quantity of area to be enclosed, there exists a region $C$ of minimum perimeter enclosing that area.  

Choose any orthonormal basis for $\RRR 2$.  Translate a line perpendicular to the first basis vector until it bisects the area of $C$.  Call the bisecting line the $x$ axis.

Both halves of $C$ must have precisely half the original perimeter; otherwise we could replace the larger-perimeter half with the reflection of the smaller, contradicting minimality of $C$.  Note also that none of the length of the perimeter can be contained in the bisecting line, since this length could be discarded before reflecting.

Choose one half of $C$, and translate a line perpendicular to the other basis vector until it bisects the area of that half of $C$. Call the bisecting line the $y$ axis.

Choose one of the resulting quarters of $C$ and call it $C'$.  Then $C'$ has a quarter of the area and, necessarily, a quarter of the original perimeter of $C$. 

Certainly $C'$ is connected; otherwise we could choose a component of $C'$ whose percentage of the perimeter of $C$ is no larger than its percentage of the area of $C$.  Dilation stretches area by a larger factor than perimeter, so we could obtain a better piece than $C'$, and reflect it twice to contradict the minimality of $C$.

Now here is the main point.  Suppose $C'$ is not a quarter circle centered at the origin.  Simply do an angular stretch, mapping $\theta$ to $2\theta$, as in Figure 1.

\begin{figure} \label{ellipse}
\includegraphics[scale=0.4]{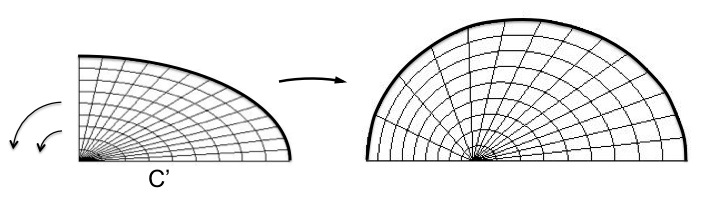}
\label{stretch}
\caption{}
\end{figure}
This doubles the area, but stretches the perimeter by \emph{less} than 2, since it is not all lined up with the $\theta$ direction.  Reflecting the stretched image across the $x$ axis then completes an enclosure that contradicts the minimality of $C$.

So $C'$ is a quarter circle.  The same argument can be applied to the other quarter of $C$ above the $x$ axis to show that it, too, is a quarter circle.  Of course, these quarter circles must line up since otherwise some of the perimeter would lie on the axis, which would lead to a contradiction as commented above.

Similarly the half of $C$ below the $x$ axis must be a half circle.  A priori, the $y$ axis selected to bisect the bottom half of $C$ might not be the same as the one that bisects the top half, but in the end, the upper and lower half circles must meet.  So $C$ is, indeed, a circle.

Now move to $\RRR n$.  The argument is similar; bisect a minimizer $M$ with a hyperplane, choose a half and bisect it with a perpendicular hyperplane, and continue until obtaining a piece $M'$ of $M$ in an orthant of $\RRR n$ having $1/2^n$ of the original volume and surface area.  One by one, do angular stretches (always by a factor of 2) on $M'$ until obtaining a region lying in a half space of $\RRR n$ and having half the volume of $M$.  Now unite this region with its reflection.  The result is an improvement on the original $M$ unless $M'$ was a piece of a round ball.  Similarly, each of $2^n$ pieces of $M$ must be spherical pieces and must line up, so that the original $M$ had to be a round ball. 

\end{proof}

\section{Unification}

We introduce here another key idea that we call \emph{unification}, in which we combine a family of problems, with their differing constraints, into a single minimization problem, with all surfaces competing together.  This placing of an optimization problem into a broader field makes it harder for a non-optimal competitor to have first variation zero.  Indeed, in the double bubble case we will show that there are \emph{no} competitors with unified first variation zero except the minimizers themselves: the standard weighted double bubbles.  This will eliminate the need for handling the more difficult questions of instability of equilibria as in \cite{hmrr}.

In order to allow constrained quantities to vary without losing the nature of the original question, we transfer the  constraints over to the measurement scheme.  We do this by dividing the measure of a competitor by the expected minimum measure for whatever constraints it satisfies.  We call this quotient the \emph{relative area} of the competitor surface.  

This levels the playing field and unifies whole families of minimization problems --- each with its conjectured minimizer --- into the single problem of seeing whether the relative area can ever be less than 1.  Constraints such as fixed volumes no longer keep the optimization problems segregated; when volumes change, this simply changes the denominator for the relative area calculation.

It is often helpful to reduce the set of competitors before unifying; in the present paper, for example, we will unify only after restricting our attention to surfaces of revolution.

\begin{defn} \label{uspace} (Unification space and relative area) Let $\{(Q_\alpha,T_\alpha,M_\alpha)\}$ be a collection of minimization problems.  For each $\alpha$, $T_\alpha$ denotes a set of competing objects (generically, ``surfaces'') that vie to minimize the quantity $Q_\alpha$ (such as weighted surface area), and $M_\alpha\in T_\alpha$ denotes a \emph{conjectured} minimizer.  

We select for each $\alpha$ a subset
$$\chi_\alpha\subseteq T_\alpha$$
containing $M_\alpha,$
and let 
$$\chi=\{(Q_\alpha,\chi_\alpha,M_\alpha)\}.$$

Then for any competitor $S\in \cup \chi_\alpha$, we define the \emph{relative area} $\mu(S)$ by finding the class $\chi_\alpha$ that $S$ belongs to and letting
$$\mu(S)=\frac{Q_\alpha(S)}{Q_\alpha(M_\alpha)}.$$

More generally, in the absence of conjectured minimizers for some constraint values, one might replace the denominator with a conjectured lower bound on the measurement of competitors satisfying those constraints.
\end{defn}

\section{Outline of the weighted double bubble proof}

Unification and the new symmetry argument will pave the way for the double bubble theorem to follow from an application of the Gauss map to the exterior of bubble clusters, amounting in essence to a calibration via the Gauss map.  Gauss map calibration was introduced by Kleiner \cite{kl} for proving isoperimetric inequalities in manifolds; see also \cite{hhm} for another application of this idea.

Kleiner's paper \cite{kl} notes (p. 38) an observation that he traces back to Almgren and others, that relates mean curvature to the derivative of surface area as volume changes in the isoperimetric profile for single bubbles.  Unification can be seen as applying this observation to multiple bubbles and extending it beyond mean curvature to include other quantities such as weighted area.

Unification appears promising for a wide variety of minimization problems, and can be used in connection with a number of other methods.  When applied to multiple bubble problems, one benefit of unification is to change average inequalities into piecewise inequalities.  A unified equilibrium surface that did better than the expected minimum would not only have smaller \emph{total} surface area than the corresponding standard double bubble, it must have smaller surface area for \emph{each piece} (the exterior of each bubble and the interface).  In addition, the mean curvature on each piece must be smaller.

These inequalities on corresponding pieces of a competitor versus the proposed minimizer open the way for the following simple argument. 

First, smaller mean curvature for a comparison surface means smaller Gauss curvature (or in higher dimensions, product of principal curvatures --- that is, the Jacobian of the Gauss map, also called Gauss-Kronecker curvature).  This is because the standard double bubbles consist of spherical caps, whose equal principal curvatures already give the best possible conversion factor between the sum and the product of mean curvatures.

Second, the image of the Gauss map (applied to the exterior surfaces of a competitor) has overlap(s) because of the outward bending at the singular meeting(s) of the two bubbles.  A nonstandard unified minimizer would, because of its disconnected singular set, have to have more overlap area than a standard double bubble; this is closely related to the isoperimetric theorem \emph{within} the sphere; indeed, we use the latter in our proof.

The greater overlap would require such a minimizer to generate more total Gauss image area in order to cover the sphere.  But it would have to achieve this with smaller-area domains and smaller Jacobians, which is impossible.

\section{Double bubbles in $\RRR n$}

\begin{defn} \label{swdb} A \emph{standard weighted double bubble} is made up of three distinct caps of $(n-1)$-spheres, all meeting along their common boundary $(n-2)$-sphere; the middle cap may be a flat $(n-1)$-disk. The angles between caps are related to the weights as in Figure 2; a triangle with sides perpendicular to the tangent planes at the junction must have side lengths proportional to the weights.  This is equivalent to the condition that the three conormal vectors (i.e., tangent to the surface and normal to the boundary) with lengths equal to the weights should sum to zero.

We allow three degenerate cases for standard weighted double bubbles.  The first involves disjoint spheres.  This corresponds to an interface weight being at least as large as the sum of the two exterior weights.   The second occurs when one prescribed volume is zero.  The third case involves one sphere inside the other, and occurs when the exterior weight of the enclosed sphere is at least as large as the sum of the other exterior weight and the interface.
\end{defn}

\begin{figure} 
\includegraphics[scale=0.4]{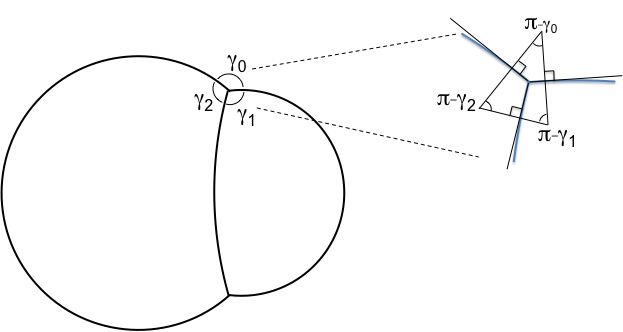}
\caption{}
\end{figure}

\begin{prop} \label{exists} For any pair of nonnegative volumes (not both zero) and nonnegative weights (not all zero) there is exactly one standard weighted double bubble with those volumes and weights.  Further, the radii and distance between centers in this bubble depend in a locally Lipschitz fashion upon the volumes and weights.
\end{prop}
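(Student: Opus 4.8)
The plan is to reduce the statement to a one-parameter monotonicity fact, using scaling invariance together with the force-balance (conormal) condition. First I would fix the weights and observe that the conormal balance---the requirement that the three conormals, scaled by the weights, sum to zero---is a purely local condition at the junction that fixes the three tangent directions of the caps along the singular $(n-2)$-sphere. These directions are exactly the edges of the triangle whose side lengths are the weights (the triangle of forces), so they are determined up to a common rotation as soon as the weights satisfy the strict triangle inequalities. Since the surface is one of revolution, the junction must lie in a hyperplane perpendicular to the axis, and each cap is then the unique sphere centered on the axis passing through the chosen junction $(n-2)$-sphere with its prescribed tangent direction. Hence a standard double bubble is completely determined by the junction radius $\rho$ together with the common tilt angle $\psi$ of the conormal configuration---two real parameters, matching the two volume constraints.

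Next I would exploit scaling. A dilation by $\lambda$ multiplies both volumes by $\lambda^n$ while preserving the weights and all angles, so it carries standard double bubbles to standard double bubbles, and for fixed tilt $\psi$ varying $\rho$ is precisely this dilation. Normalizing $V_1+V_2$ to $1$ therefore removes $\rho$ and leaves the single shape parameter $\psi$. The proposition then reduces to the claim that the volume fraction $t(\psi)=V_1/(V_1+V_2)$ is continuous and strictly monotonic as $\psi$ ranges over its admissible interval, sweeping out all of $(0,1)$, with the endpoints $t\to 0$ and $t\to 1$ corresponding to the degenerate single-sphere case of a vanishing volume. Monotonicity yields uniqueness, and continuity with the intermediate value theorem yields existence: given target volumes, set $t=V_1/(V_1+V_2)$ to recover $\psi$, then restore scale by $\lambda=(V_1+V_2)^{1/n}$. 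The two weight-degenerate regimes---disjoint spheres when the interface weight exceeds the sum of the exterior weights, and nested spheres in the opposite extreme---fall outside the triangle-inequality range and must be handled separately, but there the configuration is a pair of (possibly nested) single spheres, and existence and uniqueness reduce to the elementary sizing of one sphere to a prescribed volume.

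For the Lipschitz dependence I would work first on the open set where all three weights obey strict triangle inequalities and both volumes are positive. There the map $(\rho,\psi)\mapsto(V_1,V_2)$, with the weights as smooth parameters, is real-analytic, and the strict monotonicity of $t(\psi)$ combined with the scaling behavior in $\rho$ makes its full Jacobian nonsingular. The inverse function theorem then renders the radii and the distance between the centers real-analytic, hence locally Lipschitz, functions of the volumes and weights. Near the degenerate loci---a volume tending to zero, or the weights approaching a triangle-inequality boundary---the analytic Jacobian may degenerate and the parametrization changes character, so I would instead establish the local Lipschitz estimate by hand, expanding the geometric quantities $\rho$, the cap radii, and the intercenter distance in the small parameter measuring distance to degeneracy and checking that their difference quotients remain bounded across the transition.

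The main obstacle I expect is the global strict monotonicity of $t(\psi)$: local monotonicity near any single shape is merely the nonvanishing of a Jacobian, but ruling out a turning point over the entire admissible range---equivalently, showing the volume map is globally injective and not just a local diffeomorphism---requires a genuine comparison argument establishing that tilting the junction transfers volume from one bubble to the other in a strictly one-signed way. A secondary difficulty is the Lipschitz bound exactly at the degenerate transitions, where the clean inverse-function argument fails and the estimates must be verified directly.
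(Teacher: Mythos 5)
Your overall architecture matches the paper's in spirit: both reduce, via scale invariance, to a one-parameter family of shapes for fixed weights, and then need strict monotonicity of the volume ratio along that family, with uniqueness coming from monotonicity and existence from the intermediate value theorem. The problem is that you have explicitly left the central step unproven: you name the global strict monotonicity of $t(\psi)$ as ``the main obstacle I expect'' rather than establishing it. As written, the proposal is a plan with a hole exactly where the content of the proposition lies. Local nonvanishing of a Jacobian gives only a local diffeomorphism, not global injectivity of the volume map, so without the monotonicity you get neither uniqueness nor a controlled existence argument.

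The paper closes this hole by choosing a different parametrization of the same one-parameter family, one in which the one-signed volume transfer is geometrically evident. Building on Proposition 2.1 of \cite{hmrr}, it fixes the first sphere as a unit sphere, fixes the meeting angles (which are determined by the weights), and takes the radius of the second sphere as the free parameter. Shrinking that sphere increases the curvature of both the second exterior cap and the interface simultaneously, so bubble 1 gains volume while bubble 2 loses volume; the ratio $V_1/V_2$ is therefore strictly monotone and sweeps out all ratios, after which a dilation fixes the absolute volumes. In your coordinates (tilt angle $\psi$ at fixed junction radius $\rho$) all three caps change at once and the sign of $dt/d\psi$ is genuinely unclear, which is presumably why you flagged it; the simplest repair is to switch to the paper's parameter. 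On the Lipschitz claim, your inverse-function-theorem argument in the interior is morally the same as the paper's (which asserts nonzero partial derivatives of consistent sign in a two-step adjust-then-dilate procedure), but your treatment near the degenerate loci is a promise rather than an estimate; the paper's concrete mechanism there is that one of the volumes moves monotonically through both adjustment steps, so a large parameter change would force a large volume change, a contradiction.
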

\begin{proof} The degenerate cases, as described in Definition \ref{swdb}, are clear.    
Assume, then, that the weights satisfy the strict triangle inequality.  In particular, all three are positive; label them $w_0$ for the interface and $w_1,w_2$ for the respective exteriors.

Our proof builds on that of Proposition 2.1 in \cite{hmrr}. 

Consider a unit sphere through the origin as in Figure 3.  Form a small triangle with side lengths $w_0, w_1, w_2$, with the side of length $w_1$ being perpendicular to the tangent plane to the sphere at the origin and the side of length $w_2$ being exterior to the sphere.

Take another sphere intersecting the first at the origin (and elsewhere) at the correct angle so that the triangle's side of length $w_2$ is perpendicular to the tangent plane to the second sphere at the origin.

Find the unique third sphere that contains the intersection of the first two spheres and whose tangent plane at the origin is perpendicular to the triangle side of length $w_0$.  

Form a standard weighted double bubble from the caps of these spheres, with the bubble's interface being a subset of the third sphere. 

\begin{figure} 
\includegraphics[scale=0.4]{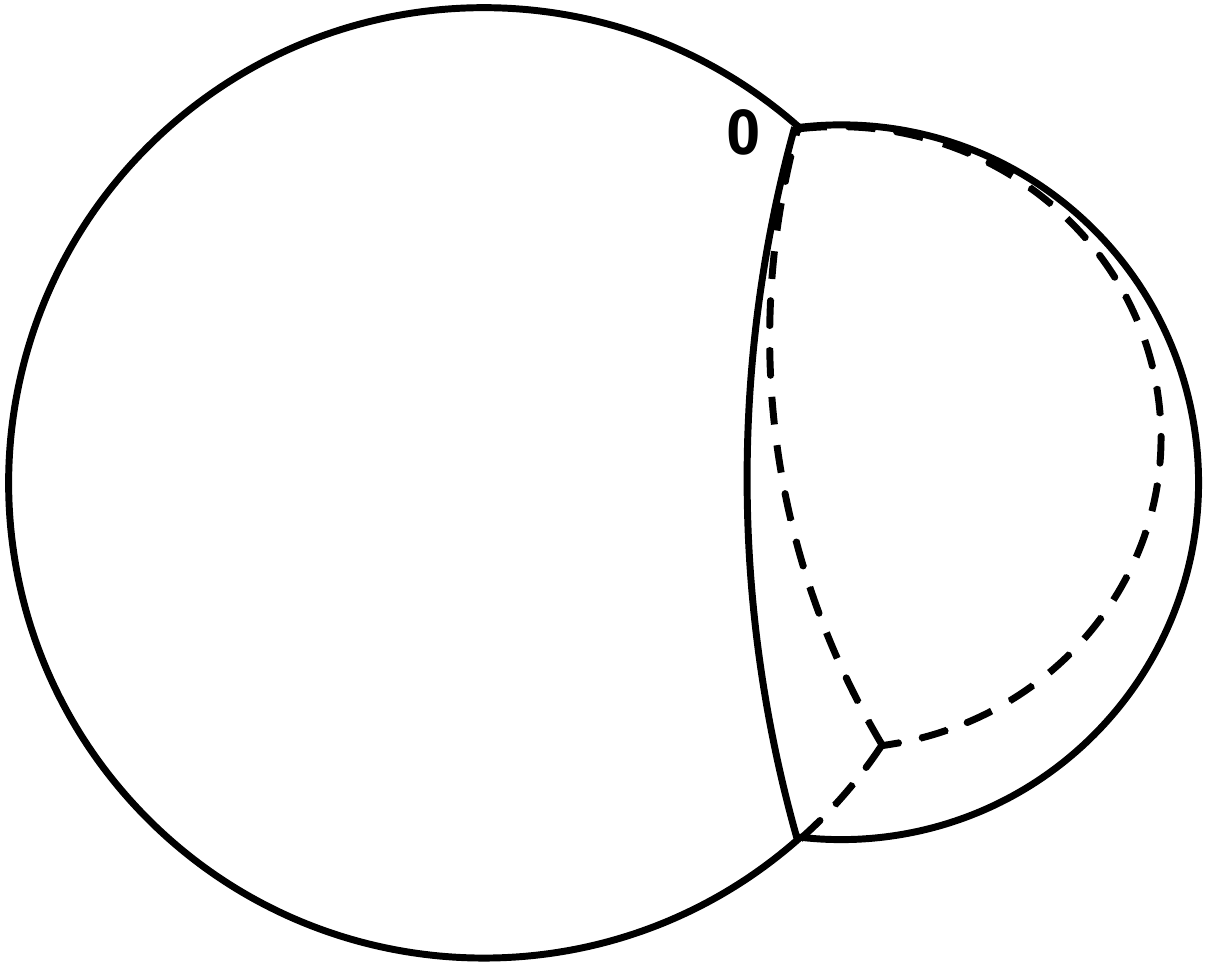}
\caption{}
\end{figure}

Now we will vary this figure.  With the angles held fixed, reducing 
the size of the second sphere increases the curvatures of both the second sphere and the interface, causing the volume of bubble 1 to increase and the volume of bubble 2 to decrease. Increasing the size of the second sphere has the opposite effect.  Thus,  we can adjust the second sphere until the ratio of volumes is correct, after which we dilate the picture to obtain the correct volume pair.  This gives existence and uniqueness.

For the Lipschitz dependence, suppose we have a standard weighted double bubble and we change slightly the prescribed volumes and/or weights.  First adjust the angles at which the spheres meet, to accommodate the new weights; this changes the volumes slightly.  To arrive at the new target volumes, as before adjust one radius until the volume ratio is correct, then dilate.  

It is geometrically clear that in the step where one radius is held fixed, the partial derivatives by the other radius of both volumes are nonzero.  The same is true in the dilation step.  One volume or the other will either increase on both steps or decrease on both steps,  which means that if either of the radius adjustments were large, then at least one volume change would have been large.  This contradiction completes the proof of the local Lipschitz dependence.

\end{proof}

We are now ready to define our unification space.

\begin{defn}[The unification space $\chi$]\label{un} For any pair of nonnegative volumes $(V_1,V_2)$ (not both zero) and nonnegative weights $w_0, w_1, w_2$ satisfying the triangle inequality, let $\alpha=(V_1,V_2,w_0,w_1,w_2)$ and define $\chi_\alpha$ to be the family of all
double bubble competitors that are piecewise-smooth surfaces of revolution (unions of $(n-2)$-spheres centered on a common line),  enclosing volumes $V_1,V_2$, with connected exterior region.  Let $\chi$ be the union of all such $\chi_\alpha$.  

Let $Q_\alpha(S)$ measure weighted surface area of a competitor $S$, with weights $w_1,w_2\in[0,1]$ on the exteriors of bubbles 1 and 2, respectively, and $w_0$ on the interface between them. 

Our conjectured minimizers $M_\alpha$ are the standard weighted double bubbles of Definition \ref{swdb}.

\end{defn}

To justify the assumption that the region of $\RRR n$ exterior to both bubbles is connected, suppose instead that some hollow were left unfilled by the surrounding bubbles.  The boundary of the hollow consists of interface with bubble 1 and interface with bubble 2; whichever has larger (unit-weight) surface area can be deleted, thus filling the hollow with the appropriate bubble.  This may increase the weight on the remaining interface, but because of the triangle inequality on the weights, the total weighted surface area will not increase.  Since this process would increase a volume, the proposed minimum weighted surface area would also increase, and relative area would decrease.

\vskip 0.1in

We need to discuss existence, symmetry and regularity of minimizers of $\mu$.

\begin{prop}
There exists a minimizer of relative area in the unification space $\chi$.
\end{prop}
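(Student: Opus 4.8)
The plan is the direct method of the calculus of variations, carried out in the unified space. The one genuinely new ingredient beyond a standard existence proof is that the constraint data $\alpha=(V_1,V_2,w_0,w_1,w_2)$ are themselves allowed to vary along a minimizing sequence, so I must first confine $\alpha$ to a compact set before extracting a geometric limit.

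First I would record two scaling invariances of $\mu$. Spatial dilation by $\lambda$ multiplies every volume by $\lambda^n$ and every weighted area by $\lambda^{n-1}$, and by the uniqueness in Proposition \ref{exists} it carries $M_\alpha$ to $M_{\alpha'}$, so both numerator and denominator of $\mu$ scale by $\lambda^{n-1}$ and $\mu$ is unchanged. Likewise, multiplying all three weights by a constant $c>0$ multiplies $Q_\alpha(S)$ by $c$ while leaving the shape of the standard bubble (which depends only on the ratios of the weights, by Definition \ref{swdb}) unchanged, so $Q_\alpha(M_\alpha)$ is multiplied by $c$ as well and $\mu$ is again unchanged. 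Hence along any minimizing sequence $S_k\in\chi_{\alpha_k}$ with $\mu(S_k)\to m:=\inf_\chi\mu\le 1$, I may normalize $V_1^{(k)}+V_2^{(k)}=1$ and $w_0^{(k)}+w_1^{(k)}+w_2^{(k)}=1$. The normalized parameters then lie in a fixed compact set (the product of the volume simplex with the closed, bounded triangle-inequality region, in which each weight is in fact at most $\tfrac12$), so after passing to a subsequence I may assume $\alpha_k\to\alpha_*$, where $\alpha_*$ still has at least one positive volume and not all weights zero. By the local Lipschitz dependence in Proposition \ref{exists}, $Q_{\alpha_k}(M_{\alpha_k})\to Q_{\alpha_*}(M_{\alpha_*})$, which is positive away from the degenerate corners, so the bound $\mu(S_k)\le 1$ forces a uniform bound on the weighted areas $Q_{\alpha_k}(S_k)$.

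Next I would extract a geometric limit. Each competitor is a rotationally symmetric pair of regions, so it is determined by its generating profile in the meridian half-plane, and the weighted-area bound becomes a uniform bound on a weighted length of that profile. After also normalizing translation along the axis (fixing, say, the centroid of the enclosed volume at the origin), the fixed total volume together with the area bound confines the configurations and yields a uniform bound of bounded-variation type; I would then extract a subsequence of the associated sets of finite perimeter converging in $L^1_{loc}$ to a limiting pair $S_*$. Rotational symmetry passes to such limits, and, provided no volume escapes to infinity, the enclosed volumes converge to $(V_1^*,V_2^*)$. If the limit develops a disconnected exterior or an unfilled hollow, I would apply the filling-and-merging reduction described immediately after Definition \ref{un} to restore a connected exterior without increasing $\mu$, placing $S_*$ in $\chi_{\alpha_*}$. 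Finally, lower semicontinuity of weighted perimeter under $L^1_{loc}$ convergence, together with the convergence of the weights $w_i^{(k)}\to w_i^*$, gives $Q_{\alpha_*}(S_*)\le\liminf Q_{\alpha_k}(S_k)$; dividing by the convergent positive denominators yields $\mu(S_*)\le\liminf\mu(S_k)=m$, so $S_*$ is the desired minimizer.

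The main obstacle is the confinement step, precisely because Definition \ref{un} permits $w_1$ or $w_2$ to equal $0$: a vanishing exterior weight makes part of the boundary free, so a priori a bubble could try to carry its prescribed volume off to infinity at no weighted cost, destroying both the usefulness of the area bound and the convergence of the volumes. I expect to handle this by first proving existence under the \emph{strict} triangle inequality, where all three weights stay bounded below along the subsequence and the unweighted perimeter is therefore controlled, so that the isoperimetric inequality rules out escape of mass and gives genuine compactness as above. The remaining boundary cases, in which some $w_i^{(k)}\to 0$ or a volume tends to $0$, reduce in the limit to problems of strictly lower complexity --- a single bubble, two disjoint bubbles, or nested bubbles --- whose minimizers exist by the isoperimetric theorem (Theorem \ref{isoper}) and the explicit construction of Proposition \ref{exists}; matching these against the relative-area bound completes the argument.
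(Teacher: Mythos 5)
Your proposal is correct in outline but runs along a genuinely different track from the paper. You both begin with the same pivotal observation --- that the scale invariance of $\mu$ under spatial dilation and under rescaling of the weight triple lets one normalize the parameters $\alpha$ into a compact set --- but after that the paper does not run a direct method over all of $\chi$ at once. Instead it works in two stages: for each fixed $\alpha$ it invokes Morgan's existence theory (\cite{begin}, \S\S 13.4, 16.2) to get a classwise minimizer of weighted area, defines the value function $f(\alpha)=$ minimum relative area in $\chi_\alpha$, and then proves $f$ is continuous on the compact normalized parameter space by perturbing a minimizer of one class into a competitor for a nearby class; a continuous function on a compact set attains its minimum. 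Your one-shot approach (minimizing sequence with varying $\alpha_k$, BV compactness, lower semicontinuity of weighted perimeter, ruling out escape of mass) trades the paper's soft continuity argument for harder geometric-measure-theoretic bookkeeping: you must show the $L^1_{loc}$ limit actually lands in some $\chi_{\alpha_*}$ (in particular its piecewise smoothness, which the paper also only establishes afterward via the symmetry and Delaunay-curve arguments, so this deferral is shared), and in the case $\alpha_k\to\alpha_*$ with $\alpha_*$ on the boundary you need the relative-area lower bound $\mu\ge 1$ for \emph{arbitrary} finite-perimeter competitors at $\alpha_*$, not merely for members of $\chi_{\alpha_*}$ --- which fortunately is exactly what the paper's two-membranes/isoperimetric argument provides. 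What your route buys is independence from the somewhat informal continuity-of-$f$ perturbation argument; what the paper's route buys is the ability to delegate all compactness and regularity within a class to standard references and keep the new work confined to a finite-dimensional parameter space. You might also note, as the paper does at the outset, that the case $\inf\mu=1$ is trivial since every standard bubble then realizes the minimum, so one may assume $m<1$ throughout.
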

\begin{proof}
Of course, if the infimum of $\mu$ is 1, then every proposed minimizer realizes the minimum relative area.  So suppose that the infimum of $\mu$ is $\mu_0<1$.

By Morgan's argument in \cite{begin}, sections 13.4 and 16.2, for any fixed volumes and weights satisfying the strict triangle inequality, there exists a double bubble that minimizes weighted surface area.  A priori this is an integral current but might not be piecewise smooth, but we will soon see that it is, in fact, piecewise smooth.

On the other hand, if the weights only satisfy the non-strict triangle inequality, then any piece of the expensive surface may be thought of as two membranes superimposed, representing the two cheaper surfaces.  Thus, by the regular isoperimetric inequality, we cannot do better than two separate spheres or one inside another, depending on whether the interface or one of the exteriors is the expensive surface.  

Scaling of volume pairs or of weight triples does not affect relative area, so we may restrict attention to the classes in which the larger volume equals 1 and the largest weight equals 1.  Then we get a minimum relative area function $f$ on a compact space of volume pairs and weights; if $f$ is continuous then there exists a minimizer of relative area.  To verify continuity, note first that for any two nearby sets of volume pairs and weights, the proposed minima of weighted areas are nearly equal.  Second, a small perturbation of the actual minimizer in either class gives a candidate for minimization in the other class; thus, the actual minimum is less than or nearly equal to the minimum in the other class.  But this works in both directions, so the actual minima are nearly equal.  Dividing by the nearly equal proposed minima, we obtain nearly equal relative areas, and we see that $f$ is continuous and there exists a $\mu$ minimizer $C_0$.

Now the weights that go with $C_0$ must satisfy the strict triangle inequality; otherwise $\mu(C_0)$ would equal 1.  From this we can deduce that $C_0$ has finitely many bubble components; otherwise a tiny component of volume $\epsilon$ could be either eliminated or merged with the other bubble.  One of these choices would remove at least half of the weighted surface area of the tiny component, which by the regular isoperimetric inequality is at least a constant times $\epsilon^{2/3}$.  But the corresponding adjustment in the proposed minimum surface area would only amount to $O(\epsilon)$, so that $\mu$ would be reduced, yielding a contradiction.

Now by the symmetry argument below, $C_0$ must in fact be a surface of revolution.  Each of the finitely many segments in the generating network of curves for $C_0$ must be a generalized Delaunay curve and thus smooth.  So $C_0$ is, indeed, piecewise smooth.  (Note also that this last argument also works for the minimizers within each class, so that all are piecewise smooth as claimed above.)

\end{proof}

For symmetry, we begin with a version of the ham sandwich theorem for double bubbles.

\begin{lem} \label{bisectors} Given two regions in $\RRR n$, every 2-plane in $\RRR n$ contains a vector normal to some hyperplane that divides the volumes of both regions in half.
\end{lem}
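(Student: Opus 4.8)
The plan is to reduce the statement to the two-dimensional ham sandwich theorem by orthogonal projection onto the given 2-plane, and then to establish that planar statement via the Borsuk--Ulam theorem.

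First I would fix the 2-plane $P\subseteq\RRR n$, let $\pi\colon\RRR n\to P$ be orthogonal projection, and choose orthonormal coordinates $(y_1,y_2)$ on $P$. Denote the two regions by $A_1,A_2$; if either has zero volume the claim is trivial, so assume both have positive finite volume. To each $A_i$ I associate its projected mass $\mu_i(E)=|A_i\cap\pi^{-1}(E)|$ for Borel $E\subseteq P$. The key point of the reduction is that a hyperplane with unit normal $v\in P$ is exactly $\pi^{-1}(\ell)$ for the line $\ell=\{y\in P:y\cdot v=c\}$, since $x\cdot v=\pi(x)\cdot v$ whenever $v\in P$; hence such a hyperplane bisects $A_i$ if and only if $\ell$ bisects $\mu_i$. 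Moreover each $\mu_i$ is absolutely continuous with respect to Lebesgue measure on $P$, because a Lebesgue-null set $E\subseteq P$ has null preimage $E\times\RRR{n-2}$ by Fubini; in particular $\mu_i$ assigns zero mass to every line. Thus it suffices to find a single line in $P$ bisecting both planar measures $\mu_1,\mu_2$.

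For this I would run the standard Borsuk--Ulam argument. Parametrize oriented affine half-planes by $S^2$: to $a=(a_0,a_1,a_2)\in S^2$ assign the half-plane $\{a_0+a_1y_1+a_2y_2>0\}$, and define $f\colon S^2\to\RRR 2$ by
\[
f_i(a)=\mu_i\{a_0+a_1y_1+a_2y_2>0\}-\mu_i\{a_0+a_1y_1+a_2y_2<0\},\qquad i=1,2.
\]
Negating $a$ swaps the two half-planes, so $f(-a)=-f(a)$, and $f$ is continuous precisely because each $\mu_i$ charges no line (dominated convergence then gives continuity of $a\mapsto\mu_i\{\,\cdots>0\}$, the boundary line being $\mu_i$-null). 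By Borsuk--Ulam there is $a^{*}$ with $f(a^{*})=0$. At the poles $a=(\pm1,0,0)$ one has $f_i=\pm\mu_i(P)\neq0$, so $a^{*}$ is not a pole, $(a_1^{*},a_2^{*})\neq0$, and $\ell=\{a_0^{*}+a_1^{*}y_1+a_2^{*}y_2=0\}$ is a genuine line whose unit normal $v\in P$ furnishes the desired bisecting hyperplane $\pi^{-1}(\ell)$.

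I expect the only real obstacle to be the one that defeats a naive ``rotating line'' proof: for a fixed normal direction the bisecting position of an arbitrary (possibly disconnected) region need not be unique, so there is no canonical continuous choice of bisector to feed into an intermediate-value argument. The Borsuk--Ulam setup sidesteps this entirely, since it never selects a bisector of one region first but instead seeks a common zero of the antipodal map $f$; the absolute continuity of the projected measures is exactly what guarantees the continuity of $f$ that the theorem requires.
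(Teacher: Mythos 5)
Your proof is correct, but it takes a genuinely different route from the paper. The paper stays entirely inside the circle of directions in the 2-plane $P$: for each unit vector $\vvv\in P$ it selects a canonical hyperplane $H(\vvv)$ normal to $\vvv$ bisecting the first region (chosen, among all bisectors, to pass nearest a fixed base point $\ppp$ so as to resolve the non-uniqueness you flag), observes that the imbalance $f(\vvv)$ of the second region across $H(\vvv)$ is a continuous odd function of $\vvv$, and concludes by the intermediate value theorem. You instead project both regions orthogonally onto $P$, note that the pushed-forward measures are absolutely continuous (so charge no line), and prove the resulting planar two-measure ham sandwich statement by the Borsuk--Ulam theorem on $S^2$, parametrizing oriented affine lines and checking that the antipodal zero cannot occur at a pole. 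The trade-off: the paper's argument uses only the one-dimensional IVT, but its continuity claim for $f$ hinges on the ``nearest to $\ppp$'' selection behaving well as the interval of bisectors of $V_1$ degenerates and reappears --- exactly the delicate point you identify as defeating a naive rotating-line proof. Your version sidesteps every selection issue by treating the two regions symmetrically and seeking a common zero directly, at the price of invoking the strictly stronger topological input of Borsuk--Ulam for $S^2$. Both proofs are sound, and your reduction-by-projection observation (that a hyperplane with normal in $P$ bisects $A_i$ iff the corresponding line bisects the projected measure) is a clean way to package what the paper does implicitly.
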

\begin{proof} 
Let $V_1$, $V_2$ be regions in $\RRR n$ and $P$ a 2-plane.  Choose a point $\ppp\in P$.  For each vector $\vvv\in P$ there is a hyperplane normal to $\vvv$ that bisects the volume of $V_1$.  If $V_1$ is not connected this hyperplane may not be unique; make it unique by choosing it to pass as near to $\ppp$ as possible.  Call the hyperplane $H(\vvv)$, and do the same for all vector directions $\vvv\in P$.

Let $f(\vvv)$ be the difference between the volume of $V_2$ lying on the positive side of $H(\vvv)$ (with respect to the direction of $\vvv$) and the volume of $V_2$ on the negative side.

Because of the uniqueness of $H(\vvv)$ it will follow that $f$ varies continuously with $\vvv$.  Also, $f(-\vvv)=-f(\vvv)$ for all $\vvv$, so by the intermediate value theorem, $f$ equals zero for some $\vvv\in P$.
\end{proof}

\begin{prop}[Symmetry of minimizers] \label{sym} In $\RRR n$ with $n>2$, an integral current that is a minimizer of weighted surface area enclosing two fixed volumes must be a surface of revolution, that is, a union of $(n-2)$-spheres centered at points of a fixed axis line $L$.  It follows that a minimizer of $\mu$ is also a surface of revolution.
\end{prop}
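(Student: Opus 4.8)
The plan is to reduce the proposition to a single geometric fact -- the \emph{angular stretch lemma} already used in the proof of Theorem \ref{isoper} -- fed by repeated application of the two-volume ham sandwich Lemma \ref{bisectors}. Recall the content of the stretch lemma: an angular stretch by a factor of $2$ about an $(n-2)$-plane axis multiplies enclosed volume by exactly $2$ while multiplying surface area by at most $2$, with equality precisely when the surface is one of revolution about that axis. The weighted version is immediate, since the stretch is a fixed geometric map acting identically on the two exterior pieces and the interface, so the ``at most $2$'' holds on each piece separately and hence on the weighted sum. The one structural feature that distinguishes two volumes from one is that Lemma \ref{bisectors} lets us bisect both volumes only within a $2$-plane of directions, so we will be able to bisect only $n-1$ times rather than $n$; this is exactly what forces all of our stretch axes to meet in a single line, which becomes the axis of revolution.

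First I would build the axis. Choosing mutually perpendicular unit vectors one at a time, given $\vvv_1,\dots,\vvv_{k-1}$ I apply Lemma \ref{bisectors} to a $2$-plane inside $\{\vvv_1,\dots,\vvv_{k-1}\}^\perp$ to obtain a unit vector $\vvv_k$ perpendicular to all the previous ones whose normal hyperplane $H_k$ bisects both volumes of the current orthant piece. This is possible exactly while $\dim\{\vvv_1,\dots,\vvv_{k-1}\}^\perp\ge 2$, that is for $k=1,\dots,n-1$ (here the hypothesis $n>2$ enters). Writing $x_k=\langle x,\vvv_k\rangle-c_k$, after these $n-1$ bisections I obtain a piece $M'$ in the wedge $\{x_1>0,\dots,x_{n-1}>0\}$ carrying $1/2^{n-1}$ of each volume. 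The $n-1$ hyperplanes $H_k$ are in general position, so their common intersection $L=H_1\cap\cdots\cap H_{n-1}$ is a single line, directed along the leftover vector $\vvv_n$; this $L$ is the prospective axis of revolution. As in Theorem \ref{isoper}, a reflection argument lets me take $M'$ to be a least-area orthant piece, so $\mathrm{area}(M')\le 2^{-(n-1)}\,\mathrm{area}(M)$.

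Next I would rebuild a competitor and read off the symmetry. Starting from $M'$, I perform $n-2$ consecutive angular stretches, the $k$-th in the $2$-plane $\mathrm{span}(\vvv_k,\vvv_{k+1})$ about the axis $\{x_k=x_{k+1}=0\}$, expanding the wedge to the half-space $\{x_{n-1}>0\}$; each stretch doubles both volumes and multiplies weighted area by at most $2$. Reflecting across $\{x_{n-1}=0\}$ gives a revolution-type competitor $\tilde M$ with the correct volumes and $\mathrm{area}(\tilde M)\le 2\cdot 2^{n-2}\cdot \mathrm{area}(M')\le \mathrm{area}(M)$. Minimality of $M$ forces equality throughout, so $\mathrm{area}(M')$ equals the average and every stretch has area factor exactly $2$; in particular the \emph{first} stretch does, whence by the equality case of the stretch lemma $M'$ is a surface of revolution about the first stretch's axis. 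The crucial freedom is that I may reorder the stretches: for any $i<j\le n-1$ I can take $\mathrm{span}(\vvv_i,\vvv_j)$ to be the first stretch and still complete the sequence, so $M'$ must be a surface of revolution about $\mathrm{span}(\vvv_i,\vvv_j)^\perp$ for \emph{every} such pair. Since each such axis contains $L$, the associated rotations are exactly the coordinate rotations $SO(2)_{ij}$ of $L^\perp=\mathrm{span}(\vvv_1,\dots,\vvv_{n-1})$, and these (already for consecutive pairs) generate all of $SO(n-1)$. Hence $M'$, and -- running the argument on every orthant piece and gluing across the hyperplanes $H_k$, each of which contains $L$ -- the whole current $M$, is invariant under every rotation fixing $L$, i.e. a surface of revolution about $L$. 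The final clause follows because a minimizer of $\mu$ minimizes $Q_\alpha$ within its own volume--weight class, and the weighted-area minimizer of that class is a surface of revolution by what we have just shown; since that minimizer is then admissible in $\chi_\alpha$, the two coincide.

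The hard part will be the equality case of the weighted stretch lemma at the regularity actually available: $M$ is only an integral current, so I must justify that ``area factor exactly $2$'' forces the underlying rectifiable surface to be, almost everywhere, a union of orbits of the axial rotation, and I must check that bisection, stretching, and reflection send admissible competitors to admissible competitors (piecewise-smooth surfaces of revolution with connected exterior, so that the reflected object genuinely lies in the competitor class and the hollow-filling reduction of Definition \ref{un} is not disturbed). By contrast, the purely geometric core -- that restricting to two volumes allows only $n-1$ bisections, which makes all stretch axes pass through the single line $L$, and that the resulting rotations generate $SO(n-1)$ -- is elementary once the stretch lemma is in hand.
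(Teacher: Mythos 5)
Your proposal is correct and follows essentially the same route as the paper: $n-1$ ham-sandwich bisections from Lemma \ref{bisectors} producing a wedge piece $M'$ and the axis $L$, followed by angle-doubling stretches and a reflection, with the equality case of the stretch inequality forcing rotational symmetry. Your reordering-of-stretches argument (getting invariance under every $SO(2)_{ij}$ and hence $SO(n-1)$) is a welcome explicit filling-in of a step the paper leaves implicit, but it is a refinement of the same proof, not a different one.
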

\begin{proof}
We follow the ideas of the proof of the isoperimetric Theorem \ref{isoper}.

Let $M$ be a double bubble competitor that minimizes weighted surface area for its given enclosed volumes. 
Using Lemma \ref{bisectors}, find a hyperplane $H_1$ that bisects both volumes, and a second hyperplane perpendicular to $H_2$ that bisects both half-volumes on one side of $H_1$, and so forth until obtaining a piece $M'$ of $M$ bounded by $n-1$ hyperplanes and having $1/2^{n-1}$ of each original volume (and, necessarily, the same fraction of the original surface area) of $M$.  

Let $L$ be the line formed by the intersection of the $n-1$ hyperplanes.

Iteratively apply angle-doubling stretches until matching half the volumes of $M$ on one side of a hyperplane.  Reflect across the last hyperplane to complete a figure matching the original volumes of $M$.  Now unless $M'$ was already a piece of a surface of revolution, this stretching will have multiplied surface area by less than it did volume, contradicting the minimality of $M$.

Each time when we chose one half or the other of a bisected pair of volumes, the choice was arbitrary.  So all of $M$ must be a union of $2^{n-1}$ pieces of surfaces of revolution.  If they did not match up, there would be surface area on the bisecting planes, leading to a contradiction as in the isoperimetric proof \ref{isoper}.  In the end, all of $M$ must be a surface of revolution.
\vskip 0.1in

Of course, any minimizer of $\mu$ is also a minimizer of weighted surface area for its given enclosed volumes, so it, too, is a surface of revolution, as required.
\end{proof}

Now that we know a minimizer is a surface of revolution, its regularity also follows:
\begin{prop}[Regularity of minimizers] \label{regu} The planar generating network of a minimizer of $\mu$ has finitely many junction points around which the exterior region and the two bubble regions meet.  The junction points are connected by smooth curve segments.
\end{prop}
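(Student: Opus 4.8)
The plan is to exploit that, by Proposition \ref{sym}, the minimizer $C_0$ of $\mu$ is a surface of revolution, so it is determined entirely by its generating network $\Gamma$ of curves lying in a closed half-plane whose boundary is the axis $L$. Rotating an $(n-2)$-sphere of radius $r$ through the full revolution contributes area proportional to $r^{n-2}$ times arclength, so the weighted surface area of $C_0$ equals a weighted length functional
$$ Q(\Gamma) = c_{n}\int_\Gamma w(s)\, r(s)^{n-2}\, ds, $$
where $r(s)$ is the distance from the axis, $w(s)\in\{w_0,w_1,w_2\}$ is the material weight carried by the arc at $s$, and $c_n$ is the measure of the unit $(n-2)$-sphere. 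Likewise each enclosed volume becomes a weighted planar area. Thus regularity for $C_0$ reduces to a regularity statement for a length-minimizing network in the half-plane under the real-analytic, strictly positive weight $\rho(x,y)=y^{n-2}$, with the three material weights distinguishing the three types of arc.

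First I would establish interior smoothness of the arcs. Away from junction points and from the axis, each arc is a critical point of $Q$ subject to the volume constraints, and its first variation (with a Lagrange multiplier for each constrained volume) yields the generalized Delaunay equation: a second-order ODE expressing that the weighted mean curvature of the corresponding surface of revolution is constant along the arc. Since $y^{n-2}$ is real-analytic for $y>0$, this ODE has analytic coefficients, so each arc is real-analytic, hence smooth; these are precisely the generalized Delaunay curves noted in the existence argument. Where an arc meets the axis, smoothness of the revolved surface across $L$ forces the generating curve to meet $L$ orthogonally, and an analytic reflection across $L$ extends the Delaunay solution and delivers smoothness there as well.

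The main obstacle is ruling out an accumulation of junction points, that is, proving they are finite in number. I would argue by contradiction via blow-up. Suppose junctions accumulate at a point $p$; since $C_0$ has only finitely many bubble components, $p$ lies on the boundary of a fixed component, so arbitrarily small junctions cluster there. Rescale about $p$. If $p$ lies off the axis, the positive weight $y^{n-2}$ converges to a nonzero constant and the blow-up limit is an ordinary length-minimizing network of straight segments; if $p$ lies on $L$, the weight is homogeneous under such rescaling and the limit minimizes $\int y^{n-2}\,ds$ among cones from $p$. In either case the limit is a minimizing planar network whose only singularities are isolated junctions at which the three conormals weighted by $w_0,w_1,w_2$ balance---the same minimal-network picture underlying the standard double bubble. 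Such a limit admits no accumulating junctions, contradicting the assumption. Hence the junctions are isolated, and since they lie on the boundaries of finitely many components with $\Gamma$ of finite total weighted length, there are only finitely many.

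Finally, the finitely many junction points are joined by the smooth Delaunay arcs just described, and at each junction the balancing of the weighted conormals is exactly the stationarity condition defining the standard weighted double bubble in Definition \ref{swdb}. This yields the description of $\Gamma$ as finitely many smooth arcs meeting at finitely many junction points around which the exterior and the two bubble regions meet, which is the assertion of the proposition.
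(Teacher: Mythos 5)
Your treatment of interior smoothness (reduce to the weighted length functional $\int w\,y^{n-2}\,ds$ in the half-plane, derive the generalized Delaunay ODE, invoke analyticity) matches what the paper does in one sentence at the end of its proof, and is fine. Where you genuinely diverge is on the central issue, finiteness of the junction points. The paper's argument is elementary and combinatorial: having first bounded the volume of any bubble component from below (delete a microscopic component or merge it with the opposite bubble; the isoperimetric inequality plus strictness of the triangle inequality on the weights makes this a strict improvement in $\mu$), it observes that one can build the planar picture by adding one connected bubble component at a time, and that connectedness of the exterior region forces each added component to contribute at most two new junction points. Finitely many components therefore give finitely many junctions, with no analysis required. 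Your route replaces this with a blow-up argument at a hypothetical accumulation point of junctions.

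The blow-up route can be made to work, but as written it has a genuine gap at the decisive step. From ``the blow-up limit is a minimizing network with only isolated junctions'' you conclude ``contradiction with the assumed accumulation,'' but that does not follow: junctions of the rescaled networks may coalesce or converge to \emph{regular} points of the limit, so the limit having no accumulating junctions says nothing about the original configuration at small positive scales. To close this you need singularities to persist under the limit --- e.g.\ a monotonicity formula for the weighted functional, upper semicontinuity of density, and an $\epsilon$-regularity statement (if the network is sufficiently close to a single weighted geodesic in a ball, it is regular there), or alternatively the classification of minimizing tangent cones together with the dimension-reduction argument showing a cone cannot have a singular point away from its vertex. None of this is sketched, and near the axis the degeneration of the weight $y^{n-2}$ adds a further wrinkle to the monotonicity formula. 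The paper's combinatorial argument buys you exactly the avoidance of all this machinery, which is much of the point of the proposition; if you keep the blow-up approach you must supply the persistence-of-singularities step explicitly.
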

\begin{proof}
If the triangle inequality on the surface weights is not strict, the isoperimetric inequality guarantees that a minimizer must consist of two spheres, either nested or disjoint.

Suppose then that the weights satisfy the strict triangle inequality.  Then there is a lower bound on the volume of a component of a minimizer, since a microscopic component could either be deleted (merged with the exterior) or merged with the opposite bubble, depending on which option reduced the (unweighted) surface area the most.  The isoperimetric inequality applied to the tiny component, together with the strictness of the triangle inequality on the weights, would then guarantee a sufficient decrease in weighted surface area to more than pay the tiny cost of adjusting the volumes in the proposed minimizer.

\begin{figure}[h]
\includegraphics[scale=0.5]{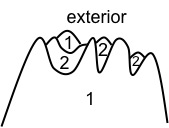}
\caption{}
\end{figure}

A single component could meet the exterior in many fingers, separated by little components of the opposite bubble.  So we might still worry about the finiteness of the entire structure.
But if we imagine building up the planar picture from scratch by adding one connected bubble component at a time,  since the exterior region must be connected, each addition of a bubble component adds at most two new junction points.  There are finitely many components, so there are finitely many junction points.

Between junction points, segments of curve are smooth, being governed by a differential equation arising from variational calculus.
\end{proof}

\begin{prop}[Singular structure of minimizers]  \label{existreg}  The smooth pieces of a minimizer of relative area meet in threes along $(n-2)$-spheres at angles matching those of the corresponding standard weighted double bubbles.\end{prop}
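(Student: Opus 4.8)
The plan is to analyze the first variation of relative area \emph{locally} at each junction point of the planar generating network, exploiting that (by Propositions \ref{sym} and \ref{regu}) a minimizer $C_0$ of $\mu$ is a surface of revolution whose generating network consists of finitely many smooth arcs meeting at finitely many junction points lying off the axis $L$. At such a junction point $\ppp$, at distance $r=\dist(\ppp,L)>0$, let $w_1,\dots,w_k$ be the weights of the arcs meeting there and $\tau_1,\dots,\tau_k$ their unit tangents at $\ppp$, each pointing away from $\ppp$. Since the full surface is the revolution of this planar picture, the weighted $(n-1)$-area near $\ppp$ is, to leading order, the weighted planar arc length times $\omega_{n-2}\,r^{n-2}$, the measure of the generating $(n-2)$-sphere, so moving $\ppp$ reduces to a planar computation.

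First I would establish the force-balance condition $\sum_i w_i\tau_i=0$. Take a vector field $X$ supported in a small meridian disk $B_\delta(\ppp)$, equal to a fixed unit vector $\vvv$ on $B_{\delta/2}(\ppp)$ and cut off to zero on the boundary, and flow $C_0$ by $X$ for time $\epsilon$. The endpoint contributions of the arcs at $\ppp$ give a first variation of weighted area equal to $-\epsilon\,\omega_{n-2}\,r^{n-2}\,\vvv\cdot\sum_i w_i\tau_i+O(\epsilon\delta)$, while the curvature (bulk) terms, being integrals over arcs of length $O(\delta)$, contribute only $O(\epsilon\delta)$. Crucially, the enclosed volumes change by only $O(\epsilon\delta)$, since the moving surface lies in $B_\delta(\ppp)$; by the local Lipschitz dependence of the standard bubble (Proposition \ref{exists}) the denominator $Q_\alpha(M_\alpha)$ of $\mu$ therefore also changes by only $O(\epsilon\delta)$. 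Hence $\mu$ changes by $-\epsilon\,\omega_{n-2}\,r^{n-2}\,(Q_\alpha(M_\alpha))^{-1}\,\vvv\cdot\sum_i w_i\tau_i+O(\epsilon\delta)$, and if $\sum_i w_i\tau_i\neq0$, choosing $\vvv$ along this vector and then $\delta$ small makes the change negative, contradicting minimality of $C_0$. This is exactly the unification payoff: because volumes are allowed to float, a first-order force imbalance is not protected by any volume constraint, and the negligible change in the normalizing denominator cannot rescue it. So $\sum_i w_i\tau_i=0$, which by Definition \ref{swdb} is precisely the standard weighted double bubble angle condition.

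Next I would show that each junction has valence exactly three. By Proposition \ref{regu} the arcs at a junction separate the exterior, bubble 1, and bubble 2, and going cyclically around $\ppp$ no two adjacent sectors are the same region, since an arc always separates distinct regions. With only three region types, any junction of valence $k\ge4$ must repeat some region in two non-adjacent sectors, so that region is pinched to the single point $\ppp$; smoothing such a pinch strictly decreases unit-weight, hence weighted, surface area while changing volumes by an arbitrarily small amount, and the relative-area argument above again produces a contradiction. Thus $k=3$, a valence-three junction meets all three regions and so carries the weights $w_1,w_2,w_0$, and the conormals of the three revolution surfaces along the junction $(n-2)$-sphere lie in the meridian plane and coincide with $\tau_1,\tau_2,\tau_3$. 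Combined with the force balance, the three smooth pieces meet along this $(n-2)$-sphere at the standard weighted angles.

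The main obstacle is the bookkeeping that makes the localized variation honest: verifying that the volume change, and hence the change in the relative-area denominator, is genuinely of lower order ($O(\epsilon\delta)$) than the force term ($O(\epsilon)$, uniform in $\delta$), so that letting $\delta\to0$ isolates a pure force balance independent of any volume multiplier. The off-axis hypothesis $r>0$ is essential here, keeping the Jacobian $r^{n-2}$ bounded below so the force term does not degenerate; I would handle the possibility of a junction on $L$ separately, noting that an on-axis meeting of three revolution surfaces is a higher-order conical singularity that can be opened up to strictly reduce relative area, so it does not occur in a minimizer.
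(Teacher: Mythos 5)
Your proposal is correct and follows essentially the same route as the paper, which also argues in two steps: junctions cannot have valence four or more (by the pinch/separate argument on repeated regions) and a force balance at triple junctions gives $\sum_i w_i\tau_i=0$, matching Definition \ref{swdb}. Your version simply makes rigorous what the paper states as "a consideration of forces," in particular the useful bookkeeping that the volume change, and hence the change in the relative-area denominator, is of lower order than the force term.
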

\begin{proof}
Let $S$ be a minimizer of $\mu$, which by Proposition \ref{sym} is a surface of revolution.  Let $M$ the standard double bubble with the same volumes and weights as $S$.  Let $\Sigma_S$ be the network of planar curves that generate $S$, and $\Sigma_M$ the generating network for $M$.  

It is a standard fact that these curves never meet in fours, since then two of the four separated regions would be pieces of the same region (or exterior) and the junction could either separate or pinch together and reduce the surface area they generate.

\begin{figure}[h]
\includegraphics[scale=0.5]{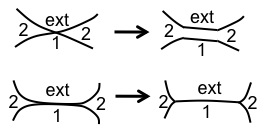}
\caption{}
\end{figure}

A consideration of forces pulling on the triple junction shows that the tangent vectors to the three curves meeting at a junction point, taken with lengths equal to the weights on the corresponding surface areas, must sum to zero.  This matches the condition in Definition \ref{swdb}.

\end{proof}

\section{Critical points of $\mu$}

An analysis of critical points in a unification space includes the behavior in both the (abstract) interior and boundary of that space.  

A point of the unification space boundary is, by definition, a set of prescribed volumes and weights that cannot be varied in all directions.  For example, if the weights satisfy only the non-strict triangle inequality, then there are only certain ways in which we can vary the weights from there and still maintain the required triangle inequality.  

The two types of boundary points are:

\begin{enumerate}
\item one of the prescribed volumes is zero, or
\item the weights only satisfy the non-strict triangle inequality.
\end{enumerate}

In all such cases the regular isoperimetric inequality shows that the minimizers have relative area 1.

\subsection{Requirements for an interior critical point of $\mu$}

A critical point must, of course, be in equilibrium with respect to the standard variational principles that apply to \emph{fixed} volumes and weights.  Thus, a critical point will consist of pieces of constant mean curvature that meet at angles prescribed according to the weights (see Definition \ref{swdb}).

\begin{prop} 
\label{lessarea} 
Let $\mu_0$ be the minimum value of the relative area $\mu$ on $\mathcal{K}$, and let $w_0, w_1, w_2$ and $V_1,V_2$ be weights and volumes of some minimizer $C_0$ that achieves relative area $\mu_0$.   Supposing that $\mu_0<1$, the surface $C_0$ will have to be a nonstandard constant mean curvature weighted double bubble of revolution.  Let $M_0$ be the associated standard double bubble with the same weights and volumes. 

Then the two exterior surface areas of $C_0$ must be less than or equal to $\mu_0$ times the corresponding surface areas of $M_0$. \end{prop}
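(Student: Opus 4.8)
The plan is to exploit the central feature of unification: because $\mu_0$ is the \emph{global} minimum of relative area over the whole space $\KK$, the competitor $C_0$ must do at least as well as $\mu_0$ against \emph{every} nearby problem, not just its own. I would use this to convert the single inequality $\mu(C_0)=\mu_0$ into a separate inequality for each exterior piece, by varying the volume enclosed by one bubble and watching how weighted area and the denominator (the proposed minimum $Q_\alpha(M_\alpha)$) respond together.

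First I would recall that, being a minimizer of relative area with $\mu_0<1$, the surface $C_0$ is by Propositions \ref{sym} and \ref{regu} a piecewise-smooth surface of revolution in equilibrium, so by Proposition \ref{existreg} its pieces meet in threes at the correct weighted angles; in particular each exterior piece has constant mean curvature. Next, fix bubble $2$ and its volume, and consider the one-parameter family obtained by letting the volume $V_1$ of bubble $1$ vary through a normal push-out of the exterior surface of bubble $1$. The key computation is the standard relation (the one Kleiner attributes to Almgren, quoted in the outline) that the derivative of weighted surface area with respect to enclosed volume equals the (weighted) mean curvature. Because $C_0$ realizes the \emph{minimum} relative area, its own relative area cannot increase under this variation no faster than that of the conjectured minimizers $M_\alpha$ as $\alpha$ moves; comparing the two derivative relations shows that the mean curvature of the exterior of bubble $1$ in $C_0$ is at most the mean curvature of the exterior of bubble $1$ in $M_0$, and symmetrically for bubble $2$. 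This is the ``piecewise inequality'' promised in the outline.

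With the mean-curvature comparison in hand, I would integrate it back up. Since $\mu(C_0)=\mu_0$ is constant along the allowed directions at the minimum and the two problems share the same volumes and weights at $\alpha$, one can write the exterior area of bubble $i$ in $C_0$ as an integral (over the path in volume space from a degenerate boundary point, where relative area is $1$, up to $\alpha$) of the weighted mean curvature, and likewise for $M_0$. The pointwise domination of the integrands, together with the fact that at the boundary point the areas agree in the ratio $\mu_0$, yields
\[
\Area_{\mathrm{ext},i}(C_0)\le \mu_0\,\Area_{\mathrm{ext},i}(M_0),\qquad i=1,2,
\]
which is exactly the claim.

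The main obstacle I expect is making the derivative comparison rigorous at the level of the denominator $Q_\alpha(M_\alpha)$: I must show that the proposed-minimum function varies differentiably (or at least in a controlled Lipschitz way) as the volumes change, and that its derivative is again given by the mean curvature of the \emph{standard} bubble $M_\alpha$. Proposition \ref{exists} gives the local Lipschitz dependence of the standard bubbles on the data, which should supply the needed regularity of the denominator, but the careful part is choosing variations of $C_0$ that change only one volume while keeping the competitor admissible (connected exterior, correct angles), so that the first-variation bookkeeping isolates a single exterior piece. Once that isolation is achieved, the rest is the elementary integration sketched above.
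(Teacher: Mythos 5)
There is a genuine gap. Your opening move --- varying the volume $V_1$ and using the relation between the derivative of weighted area and mean curvature --- is essentially the paper's proof of the \emph{next} proposition (\ref{lesscurvature}, the mean-curvature comparison), not of this one. The paper proves the area comparison by a different and more direct first variation: it perturbs the \emph{weight} $w_1$ rather than a volume. Since $\partial Q/\partial w_1$ is exactly the unweighted area of the exterior of bubble 1, and since $M_0$ is critical for volume-preserving deformations (so its shape adjustment contributes nothing to first order while it tracks the changing weights), the initial rates of change of $Q(C_0)$ and $Q(M_0)$ are $-A_1(C_0)$ and $-A_1(M_0)$ respectively; if $A_1(C_0)>\mu_0 A_1(M_0)$ the quotient $\mu$ would dip below $\mu_0$, a contradiction. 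This weight variation is precisely the device that isolates a single exterior piece's unweighted area, which is the isolation you correctly identify as the crux but do not actually achieve.

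Your proposed route back from the mean-curvature inequality to the area inequality does not work as stated, for three reasons. First, the derivative of weighted area with respect to $V_1$ is a Lagrange multiplier (a weighted mean curvature) governing the \emph{total} weighted area, so integrating it over a path in volume space does not reconstruct the area of one exterior piece separately. Second, the pointwise domination $H_{C}\le \mu_0 H_{M}$ is only established at the single parameter value $\alpha$ where the global minimum of $\mu$ is attained; along your path from a degenerate configuration up to $\alpha$ the class-by-class minimizers need not satisfy any such inequality, so the integrands are not comparable where you need them to be. Third, your anchoring claim that ``at the boundary point the areas agree in the ratio $\mu_0$'' contradicts the paper's observation that at boundary points of the unification space the relative area equals $1$. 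The fix is simply to abandon the integration and run the first-variation argument in the weight variable directly.
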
 

\begin{proof} In degenerate cases, $\mu_0=1$ and $C_0=M_0$.  

Suppose now that the theorem is false for a nondegenerate case; say the surface area of $C_0$'s bubble 1 exterior is greater than $\mu_0$ times that of $M_0$.  Now begin to reduce, at unit speed, the weight $w_1$.  
At the same time, move $M_0$ continuously through the space of proposed minimizers to continue to match the prescribed values of volume and weight.  The important point here is that $M_0$ has first variation zero in weighted area $Q$ with respect to changes that preserve volumes, so the initial rate of change of $Q(M_0)$ is due entirely to the change in $w_i$ and not in the shape of $M_0$.  

We see that the initial rates of change of $Q(M_0)$ and $Q(C_0)$ are equal to (minus) the respective areas of the exterior of bubble 1 in each.  This causes $\mu$ to dip below $\mu_0$, contradicting the assumption that we already had the minimizer of $\mu$.

\end{proof}
  
\begin{prop} \label{lesscurvature} With hypotheses as in the previous proposition, the (constant) mean curvature on each exterior piece of $C_0$ is less than or equal to $\mu_0$ times the corresponding mean curvature on $M_0$. \end{prop}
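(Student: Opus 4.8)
The plan is to mimic the proof of Proposition \ref{lessarea}, but to differentiate $\mu$ in the direction of an enclosed \emph{volume} rather than a weight. The fact that converts this into a mean-curvature statement is the observation (traced to Almgren, and used by Kleiner \cite{kl}) that the derivative of the proposed minimum weighted area with respect to an enclosed volume is the corresponding pressure. Writing $\kappa_1^M$ and $\kappa_1^C$ for the (constant) mean curvatures of the bubble-$1$ exterior in $M_0$ and $C_0$, the Young--Laplace condition at equilibrium identifies the pressure of bubble $1$ with $w_1\kappa_1^M$ in $M_0$ and with $w_1\kappa_1^C$ in $C_0$. Since $M_0$ is a critical point of $Q$ for its volume constraints, the envelope principle gives
\[
\frac{d}{dV_1}Q(M_0)=w_1\kappa_1^M,
\]
the shape change of $M_0$ contributing nothing to first order. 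For $C_0$, if I push its bubble-$1$ exterior outward by a rotationally symmetric normal variation supported away from the triple junction, leaving the interface and the bubble-$2$ exterior fixed, then $V_2$ is unchanged and, because $\kappa_1^C$ is constant on that piece, the first variation formula gives $\frac{d}{dV_1}Q(C_0)=w_1\kappa_1^C$ for the resulting competitor, which remains a piecewise-smooth surface of revolution with connected exterior and so lies in the appropriate class $\chi_{\alpha'}$.

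Next I would run the contradiction exactly as in Proposition \ref{lessarea}. Suppose, for bubble $1$, that $\kappa_1^C>\mu_0\kappa_1^M$. Differentiating $\mu$ with respect to $V_1$ (pushing the bubble-$1$ exterior of $C_0$ outward while moving $M_0$ through the standard bubbles that match the changing volumes), the numerator and denominator change at rates $w_1\kappa_1^C$ and $w_1\kappa_1^M$; using $Q(C_0)=\mu_0\,Q(M_0)$ one computes
\[
\frac{d}{dV_1}\,\mu=\frac{w_1\bigl(\kappa_1^C-\mu_0\kappa_1^M\bigr)}{Q(M_0)} .
\]
Under the assumption this is strictly positive, so $\mu$ can be pushed below $\mu_0$ by \emph{decreasing} $V_1$ (pulling the bubble-$1$ exterior of $C_0$ inward), contradicting the minimality of $\mu_0$. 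Hence $\kappa_1^C\le\mu_0\kappa_1^M$, and the identical argument applied to $V_2$ yields $\kappa_2^C\le\mu_0\kappa_2^M$.

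I expect the only real obstacle to be justifying the first displayed identity rigorously, since Proposition \ref{exists} supplies only \emph{Lipschitz}, not a priori $C^1$, dependence of the standard bubble on its volumes. I would resolve this by sandwiching. Pushing the bubble-$1$ exterior of $M_0$ inward realizes the volumes $(V_1-t,V_2)$ at weighted-area cost $Q(M_0)-w_1\kappa_1^M\,t+o(t)$, giving an upper bound on the proposed minimum there; conversely, pushing the exterior of the standard bubble for $(V_1-t,V_2)$ outward recovers $(V_1,V_2)$ and yields a matching lower bound, the relevant pressure varying continuously in $t$ by the Lipschitz dependence of the radii. These two estimates pin down the one-sided derivative and show the proposed-minimum function is $C^1$ in $V_1$ with derivative $w_1\kappa_1^M$; the same envelope computation confirms that the junction and angle terms cancel precisely because $M_0$ satisfies the equilibrium conditions of Proposition \ref{existreg}. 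The remaining points --- that the localized variation of $C_0$ contributes no boundary term at the junction yet still realizes an arbitrary small change in $V_1$ --- are routine.
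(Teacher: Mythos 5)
Your proposal is correct and follows essentially the same route as the paper, which likewise varies a volume, pushes inward on the offending exterior piece, and uses the first-order (pressure) response of both $Q(C_0)$ and the proposed minimum to drive $\mu$ below $\mu_0$. Your write-up is considerably more detailed than the paper's three-sentence argument --- in particular the sandwiching argument pinning down the derivative of the proposed minimum despite only Lipschitz dependence is a genuine refinement of a point the paper leaves implicit --- but the underlying idea is identical.
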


\begin{proof} The proof is similar to that of the previous proposition.  Supposing the proposition false, push inward slightly on any exterior having larger mean curvature than specified.  To first order this would change the surface areas of $M_0$ and $C_0$ by amounts that would enable a decrease in $\mu$, which was already at its minimum.
\end{proof}

\begin{cor} \label{bigsleeve} With the same hypotheses as above, the Gauss image of the exterior of either bubble of $C_0$ has less area than the Gauss image of the corresponding bubble exterior for $M_0$.
\end{cor}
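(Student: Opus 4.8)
The plan is to identify the Gauss image area of an exterior piece with the integral of the Jacobian of the Gauss map, $\int K\,dA$, where $K$ denotes the Gauss--Kronecker curvature (the product of the $n-1$ principal curvatures). For the spherical exterior caps of $M_0$ this Jacobian is constant: if $H_M$ denotes the (constant) mean curvature, taken as the sum of the principal curvatures on such a cap, then every principal curvature equals $H_M/(n-1)$, so $K_M=\big(H_M/(n-1)\big)^{n-1}$ and the Gauss image area is simply $K_M\cdot\Area(\text{cap})$. The strategy is to establish the pointwise estimate $K_C\le\mu_0^{\,n-1}K_M$ on the corresponding exterior of $C_0$, and then to integrate it against the surface-area bound of Proposition~\ref{lessarea}; the two resulting factors of $\mu_0$ will combine to beat $1$.

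For the pointwise estimate I would first recall that the round caps realize the optimal conversion from the sum of the principal curvatures to their product: by the arithmetic--geometric mean inequality, any point whose principal curvatures $\kappa_1,\dots,\kappa_{n-1}$ are nonnegative satisfies $\prod_i\kappa_i\le\big(\tfrac{1}{n-1}\sum_i\kappa_i\big)^{n-1}$, with equality exactly when the $\kappa_i$ agree. Since $C_0$ is a surface of revolution by Proposition~\ref{sym}, at each point its exterior has at most two distinct principal curvatures --- one meridional value $a$ and one latitudinal value $b$ of multiplicity $n-2$ --- and the latitudinal curvature of a bubble exterior is nonnegative, so $b\ge 0$. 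Where $a\ge 0$ the inequality above applies directly; where $a<0$ one has $K_C=a\,b^{\,n-2}\le 0$, and the bound is automatic because $K_M>0$. In either case $K_C\le\big(H_C/(n-1)\big)^{n-1}$, where $H_C$ is the constant mean curvature of the piece, and combining with $H_C\le\mu_0H_M$ from Proposition~\ref{lesscurvature} yields $K_C\le\mu_0^{\,n-1}K_M$ as required.

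Writing $\Sigma_C$ and $\Sigma_M$ for corresponding exterior pieces of $C_0$ and $M_0$ with areas $A_C$ and $A_M$, integrating the pointwise bound and then applying Proposition~\ref{lessarea} gives
\[
\int_{\Sigma_C}K_C\,dA\ \le\ \mu_0^{\,n-1}K_M\,A_C\ \le\ \mu_0^{\,n-1}K_M\cdot\mu_0 A_M\ =\ \mu_0^{\,n}\,K_M A_M .
\]
The quantity $K_M A_M$ is exactly the Gauss image area of $\Sigma_M$, since $K_M$ is constant on the spherical cap, while $\int_{\Sigma_C}K_C\,dA$ is the Gauss image area of $\Sigma_C$. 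Because $\mu_0<1$ and $K_M A_M>0$, the former is strictly smaller than the latter, and the same reasoning applies to the exterior of either bubble, proving the corollary.

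The step I expect to be the crux is the pointwise curvature comparison, for it is here that the geometry of the round sphere is used in an essential way: the assertion is precisely that among all curvature profiles with a prescribed sum, the sphere's equal principal curvatures maximize the product. The only genuine care needed is with signs, since a naive arithmetic--geometric mean inequality is false for indefinite curvature in dimensions $n>3$; the surface-of-revolution structure, which collapses the curvatures to the two values $a$ and $b$ with $b\ge 0$, is exactly what lets me dispose of the regions of negative meridional curvature trivially and apply the inequality only where all principal curvatures are nonnegative.
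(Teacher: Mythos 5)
Your proof is correct and follows essentially the same route as the paper: the paper likewise identifies the Gauss image area with the integral of the Gauss--Kronecker curvature, invokes the fact that for fixed mean curvature the product of principal curvatures is maximized when they are equal (so the spherical caps of $M_0$ give the optimal conversion), and then combines Propositions \ref{lessarea} and \ref{lesscurvature} to conclude. Your explicit tracking of the exponents of $\mu_0$ and your handling of the sign of the meridional curvature via the surface-of-revolution structure are welcome elaborations of details the paper states only in one line, but they do not constitute a different argument.
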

\begin{proof}
 For fixed mean curvature, Gauss-Kronecker curvature (the product of principal curvatures) is largest when principal curvatures are equal, so that by Proposition \ref{lesscurvature}, the Gauss-Kronecker curvature at any regular point of $C_0$ is at most $\mu_0$ times the constant mean curvature on the corresponding spherical piece of $M_0$.  But the Gauss-Kronecker curvature is also the Jacobian of the Gauss map.  By Proposition \ref{lessarea}, the exterior pieces of $C_0$ have less area than the corresponding pieces of $M_0$, and with a smaller Jacobian, the areas of their Gauss images will also be smaller than for $M_0$.
\end{proof}

\section{Size of the Gauss map overlap due to the singular set}

The singularities in the exterior of an equilibrium double bubble cause an overlap in the union of images of the Gauss map applied to the smooth portions of the exterior.  We need to know that there is always more total overlap area for a nonstandard competitor than for a standard one.

\begin{defn} An \emph{antenna} of a competing surface of revolution is a vector bisecting the angle at a singularity, as in Figure 6.  In the example of the figure, there are one \emph{leftward-pointing} antenna and two \emph{rightward-pointing} antennae.  A vertical vector is considered as both leftward and rightward pointing.

The most important characteristic of an antenna will be its steepness, which we will also refer to as its height (independent of the position of the singular point where the vector starts).  

A \emph{sleeve} will refer to the Gauss image of a smooth exterior component of a competitor surface of revolution; an \emph{end sleeve} is a spherical cap.  

A \emph{cuff} will refer to the overlap annulus formed as the intersection between two consecutive sleeves.  By Proposition \ref{existreg}, cuffs have the same width, which is $\pi/3$ for unit surface weights.

The \emph{inner perimeter} of a cuff is the smaller of the surface areas of the two $(n-2)$-dimensional boundary spheres of the cuff.

We will call one cuff higher than another if the corresponding antenna is steeper; equivalently, the higher of two cuffs is the one with a larger inner perimeter.

\end{defn}

\begin{figure} \label{antennae}
\includegraphics[scale=0.6]{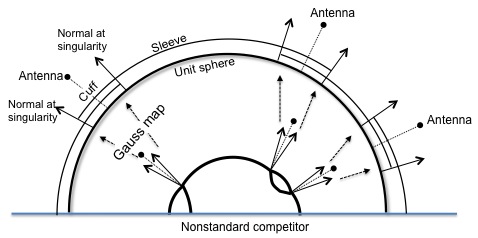}
\caption{Antennae, sleeves, and cuffs}
\end{figure}

Notice that a standard double bubble has two end sleeves that share one cuff.
All other serious competitors will have at least one extra double-cuffed sleeve in the middle.

We need the following technical lemma.
\begin{lem} \label{tech}
Let $\beta>0$, $c<d$, and $f(t)>0$ for $t\in(c,d+\beta)$.  Suppose that $f'(t)/f(t)$ is a decreasing function on $(c,d)$.  Then the function
$$h(t)=\frac{\int_{t}^{t+\beta}f(\tau)d\tau}{f(t)}$$
is also decreasing on $(c,d)$.
\end{lem}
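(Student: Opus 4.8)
The plan is to show directly that $h'(t)\le 0$ by differentiating the quotient, rewriting the numerator as a single integral, and reading off its sign from the monotonicity hypothesis. Throughout I write $g(\tau)=f'(\tau)/f(\tau)$, so the hypothesis says $g$ is decreasing, and since $f>0$ we may use $f'(\tau)=g(\tau)f(\tau)$ freely. The continuity of $f$ makes $h$ differentiable, with $\frac{d}{dt}\int_t^{t+\beta}f(\tau)\,d\tau = f(t+\beta)-f(t)$ by the fundamental theorem of calculus.

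First I would apply the quotient rule to $h(t)=\bigl(\int_t^{t+\beta}f\bigr)/f(t)$ and clear denominators, obtaining
$$ f(t)^2\,h'(t) = \bigl(f(t+\beta)-f(t)\bigr)f(t) - f'(t)\int_t^{t+\beta} f(\tau)\,d\tau. $$
Dividing by $f(t)>0$ and substituting $f'(t)=g(t)f(t)$ gives
$$ f(t)\,h'(t) = \bigl(f(t+\beta)-f(t)\bigr) - g(t)\int_t^{t+\beta} f(\tau)\,d\tau. $$

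The key step is then to write both terms on the right as the same integral. Using $f(t+\beta)-f(t)=\int_t^{t+\beta}f'(\tau)\,d\tau=\int_t^{t+\beta}g(\tau)f(\tau)\,d\tau$ and $g(t)\int_t^{t+\beta}f=\int_t^{t+\beta}g(t)f(\tau)\,d\tau$, I arrive at the clean identity
$$ f(t)\,h'(t) = \int_t^{t+\beta}\bigl(g(\tau)-g(t)\bigr)f(\tau)\,d\tau. $$
Now the conclusion is immediate: for $\tau\ge t$ the monotonicity of $g$ gives $g(\tau)\le g(t)$, while $f(\tau)>0$, so the integrand is $\le 0$ and hence $h'(t)\le 0$; if $g$ is strictly decreasing the inequality is strict on a set of positive measure, yielding strict monotonicity of $h$.

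The step requiring the most care is the comparison $g(\tau)\le g(t)$, because $\tau$ ranges all the way up to $t+\beta$, so one needs $g$ decreasing on $[t,t+\beta]\subseteq(c,d+\beta)$, not merely on $(c,d)$. I would therefore read the hypothesis as monotonicity of $f'/f$ on the full interval $(c,d+\beta)$ where $f$ is positive; with that reading the argument above is complete. This reading is genuinely needed: a routine example with $f$ having a slowly varying (decreasing) logarithmic derivative on $(c,d)$ but a sharp upward spike on $(d,d+\beta)$ makes the positive contribution of $\bigl(g(\tau)-g(t)\bigr)f(\tau)$ over $(d,t+\beta)$ dominate, forcing $h'(t)>0$. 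An equivalent packaging that makes this transparent is the substitution $\tau=t+s$, which gives $h(t)=\int_0^\beta \frac{f(t+s)}{f(t)}\,ds$ together with $\frac{d}{dt}\log\frac{f(t+s)}{f(t)}=g(t+s)-g(t)\le 0$, so each integrand is decreasing in $t$ and therefore so is $h$.
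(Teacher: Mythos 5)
Your proof is correct and follows essentially the same route as the paper's: both reduce $h'(t)\le 0$ to the sign of $\frac{f'(\tau)}{f(\tau)}-\frac{f'(t)}{f(t)}$ for $\tau\in(t,t+\beta)$, the only difference being that you write the numerator of $h'$ directly as $\int_t^{t+\beta}\bigl(g(\tau)-g(t)\bigr)f(\tau)\,d\tau$ while the paper obtains the same conclusion by noting the numerator vanishes at $\beta=0$ and has negative derivative in $\beta$. Your observation that the monotonicity of $f'/f$ is really needed on $(c,d+\beta)$ rather than $(c,d)$ is a fair reading of a slight imprecision in the statement (the paper's own proof uses the comparison at $t+\beta$ just as yours does); it is harmless in the application, where $f'/f=(n-1)\cot t$ is decreasing on all of $(0,\pi)$.
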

\begin{proof}
Differentiating $h(t)$ we get
$$\frac{d h(t)}{dt}=\frac{f(t)\bigl(f(t+\beta)-f(t)\bigr)-f'(t)\int_{t}^{t+\beta}f(t)dt}{f(t)^2}.$$
Consider the numerator of the above expression.  To show that it is negative, first note that it would be zero if $\beta=0$.  Differentiating by $\beta$, we get
$$f(t)f'(t+\beta)-f'(t)f(t+\beta)=f(t+\beta)f(t)\Bigl(\frac{f'(t+\beta)}{f(t+\beta)}-\frac{f'(t)}{f(t)}\Bigr).$$
If $f'/f$ is decreasing, the expression will be negative, as desired.

\end{proof}

\begin{prop} \label{higherlower} A higher cuff has greater surface area, while a lower cuff has greater surface area \emph{per unit} of inner perimeter.
\end{prop}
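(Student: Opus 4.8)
The plan is to realize each cuff as a band of polar angles on the Gauss sphere $S^{n-1}$ and then reduce both assertions to one real variable, handing the second one directly to Lemma \ref{tech}. Fix the competitor $C_0$ and let $\beta$ be the common angular width of its cuffs; this is legitimate because by Proposition \ref{existreg} every triple junction opens at the same angle, so the exterior wedge, and hence $\beta$, is the same at each singularity. Write $\phi$ for the polar angle on $S^{n-1}$ measured from the pole in the axis direction $L$. A latitude $(n-2)$-sphere at angle $\phi$ has $(n-2)$-measure $\omega_{n-2}\sin^{n-2}\phi$, where $\omega_{n-2}$ is the volume of the unit $(n-2)$-sphere. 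A cuff is then a band $\{\phi\in[t,t+\beta]\}$; its surface area is $A(t)=\omega_{n-2}\int_t^{t+\beta}\sin^{n-2}\phi\,d\phi$, and its two bounding spheres have measures $\omega_{n-2}\sin^{n-2}t$ and $\omega_{n-2}\sin^{n-2}(t+\beta)$.

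Next I would fix the dictionary between the geometric notion of height and the parameter $t$. At a junction the two exterior pieces have outward normals whose polar angles differ by $\beta$ and sit symmetrically about the inclination of the antenna, so the cuff is exactly the band centered at that inclination; a steeper (more nearly vertical) antenna pushes the band toward the equator $\phi=\pi/2$. Using the reflection $\phi\mapsto\pi-\phi$, which matches a leftward antenna with the rightward one of equal steepness and preserves all the measures above, I may assume the band lies in $0<t\le\pi/2-\beta/2$. On this range the inner perimeter --- the smaller bounding sphere --- is the one at the lower end, namely $P(t)=\omega_{n-2}\sin^{n-2}t$, and $P$ increases with $t$; thus larger $t$ is precisely ``higher,'' confirming the equivalence asserted in the definition.

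The two claims then separate cleanly. For the area claim, $A'(t)=\omega_{n-2}\bigl(\sin^{n-2}(t+\beta)-\sin^{n-2}t\bigr)$, and for $t<\pi/2-\beta/2$ the midpoint of $[t,t+\beta]$ lies below the equator, so the upper endpoint is nearer $\pi/2$ than the lower one and $\sin(t+\beta)>\sin t$; hence $A'(t)>0$ and a higher cuff has strictly greater area. For the per-perimeter claim, the relevant quantity is $A(t)/P(t)=\bigl(\int_t^{t+\beta}\sin^{n-2}\phi\,d\phi\bigr)/\sin^{n-2}t$, which is exactly the function $h$ of Lemma \ref{tech} with $f(\phi)=\sin^{n-2}\phi$. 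Since $f>0$ on $(c,\pi/2+\beta/2)$ for any small $c>0$ and $f'/f=(n-2)\cot\phi$ is decreasing, the lemma gives that $h$ is decreasing on $(0,\pi/2-\beta/2)$, so a lower cuff (smaller $t$) carries the greater area per unit of inner perimeter.

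The one genuinely delicate step is the identification in the second paragraph: verifying that the Gauss image of the exterior near a triple junction is precisely this polar band, that the band is centered at the antenna's inclination with the two exterior normals symmetric about it, and that the smaller (``inner'') bounding sphere is the one nearer the pole throughout the admissible range of $t$. Once that geometric dictionary is in place, everything else is the elementary monotonicity of $A$ together with Lemma \ref{tech}.
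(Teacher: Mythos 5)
Your proof is correct and follows essentially the same route as the paper's: parametrize each cuff as a polar band of common width $\beta$ on the Gauss sphere, reflect into the lower half-range so that ``higher'' corresponds to larger $t$, differentiate the band area directly for the first claim, and invoke Lemma \ref{tech} with $f$ a power of $\sin$ for the second. You in fact supply details the paper leaves implicit (the explicit check that $f'/f$ is decreasing and the antenna-to-band dictionary); the only cosmetic discrepancy is the exponent on $\sin$ (you write $n-2$ where the paper writes $n-1$), which does not affect the monotonicity argument.
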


\begin{proof} Set up spherical coordinates horizontally so as to agree with Figure 6, so that in spherical coordinates two cuffs are defined by $t_1 \le \phi \le u_1$ and $t_2\le \phi \le u_2$.  By reflecting each cuff, if necessary, we may assume that $t_i\le \pi-u_i$ for $i=1,2$.  Since the cuffs have the same width, we can define $\beta=u_1-t_1=u_2-t_2>0$.

Let  $\alpha_k$ denote the $k$-dimensional volume of the unit ball in $\RRR k$, and set
$$f(t)=(n-1)\alpha_{n-1}\sin^{n-1}(t).$$
Then the inner perimeters of the cuffs are $f(t_1)$ and $f(t_2)$, and the areas of the cuffs are
$$\int_{t_1}^{u_1} f(t)\,dt \mbox{ and }\int_{t_2}^{u_2} f(t)\,dt.$$
To see that the higher cuff has the larger surface area, note that
$$\frac{d}{ds}\int_s^{s+\beta} \sin^{n-1}(t)\,dt\,\,=\,\, \sin^{n-1}(s+\beta)-\sin^{n-1}(s),$$
 which is positive until $\pi-(s+\beta)=s$.
 
Lemma \ref{tech} shows that the lower cuff has greater surface area per unit of inner perimeter.
\end{proof}

We now prove the main proposition of this section, showing that a hypothetical minimizer that did better than standard would have to have more overlap area than the standard.

\begin{prop} \label{moreover} Suppose that $C_0$ is a minimizer of relative area (and thus a surface of revolution, by Proposition \ref{sym}) and that $C_0$ has less weighted surface area than $M_0$, the corresponding standard weighted double bubble.  Then $C_0$ must have a double-cuffed sleeve whose cuffs together have more area than the cuff of $M_0$.
\end{prop}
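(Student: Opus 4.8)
The plan is to work in the horizontal spherical coordinates of Proposition \ref{higherlower}, where a cuff of width $\beta$ occupying colatitudes $[t,t+\beta]$ has area $g(t)=\int_t^{t+\beta} f(\tau)\,d\tau$ with $f(\tau)=(n-1)\alpha_{n-1}\sin^{n-1}(\tau)$. Since $C_0$ and $M_0$ carry the same weights, every cuff of either figure has the same width $\beta$ (Proposition \ref{existreg}), so cuffs differ only in their height. I would first record the chain structure of the exterior Gauss image: because $C_0$ is a surface of revolution (Proposition \ref{sym}) with connected exterior and finitely many junctions (Proposition \ref{regu}), its sleeves form a simple chain $S_1,\dots,S_k$ in which consecutive sleeves overlap in exactly one cuff and the two end sleeves are polar caps. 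A standard double bubble is the case $k=2$; since $C_0$ is nonstandard it has $k\ge 3$ and hence at least one double-cuffed middle sleeve. It then suffices to exhibit one middle sleeve $S_m$, with band $[\ell,r]$ and cuffs $[\ell,\ell+\beta]$ and $[r-\beta,r]$, for which $g(\ell)+g(r-\beta)>g(t_0)$, where $[t_0,t_0+\beta]$ is the single cuff of $M_0$.

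The central case is the sleeve covering the equatorial direction $\phi=\pi/2$. By Proposition \ref{higherlower} the width-$\beta$ cuff of largest area is the equatorial one; write $g_{\max}$ for its area, so that in particular $g(t_0)\le g_{\max}$. If the equator-covering sleeve is a middle sleeve, with $\ell<\pi/2<r$, it is enough to prove $g(\ell)+g(r-\beta)>g_{\max}$ for every nondegenerate such sleeve. I would obtain this by minimizing $g(\ell)+g(r-\beta)$ subject to $\ell<\pi/2<r$ and the band constraint $r-\ell\ge\beta$: the unimodality and reflection symmetry of $g$ push the minimum to the degenerate corner $\ell\to 0$, $r\to\pi$, where the value is exactly $g_{\max}$, so for a genuine competitor (nonempty end sleeves) the inequality is strict. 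The decrease of $g(t)/f(t)$ supplied by Lemma \ref{tech} is what makes this minimization valid in every dimension $n$, not just for $n=3$.

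The main obstacle is the asymmetric regime together with the existence of a \emph{suitable} middle sleeve. Reducing to $g_{\max}$ is wasteful exactly when the volumes are unequal: then $M_0$'s cuff lies well off the equator and $g(t_0)$ is small, but $C_0$ may slide its equatorial direction under an oversized end cap (the case where one end sleeve covers more than a hemisphere plus $\beta$), leaving only lower middle sleeves whose cuffs need not beat $g_{\max}$. Here I would use that $C_0$ and $M_0$ enclose the same volumes, so that a lopsided $C_0$ forces a correspondingly lopsided $M_0$ and lowers $g(t_0)$ by at least as much as it lowers the relevant cuffs of $C_0$. Converting this shared-volume constraint into the required height comparison — and bounding from below the total overlap forced by the disconnected singular set — is where the isoperimetric theorem within $S^{n-1}$ enters, and I expect it to be the crux. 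Once it is in hand, the proposition feeds Corollary \ref{bigsleeve}: a double-cuffed sleeve outweighing $M_0$'s cuff makes the total Gauss overlap of $C_0$ exceed that of $M_0$, contradicting the fact that $C_0$'s smaller, flatter exterior pieces generate less Gauss image area.
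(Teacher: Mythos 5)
Your setup (chain of sleeves, width-$\beta$ cuffs, reduction to a single double-cuffed sleeve) matches the paper, but the core of your argument has a genuine gap, and in fact your central inequality is false as stated. You propose to show $g(\ell)+g(r-\beta)>g_{\max}$ for an equator-covering middle sleeve by an unconstrained minimization over $\ell<\pi/2<r$, claiming the minimum is attained at $\ell\to 0$, $r\to\pi$ with value $g_{\max}$. At that corner the value is $2\int_0^\beta f(\tau)\,d\tau$, the area of two polar cuffs, which is strictly \emph{less} than the equatorial cuff area $g_{\max}$ (e.g.\ for $n=3$, $\beta=\pi/3$: $\pi/3-\sqrt{3}/4\approx 0.61$ versus $\pi/6+\sqrt{3}/4\approx 0.96$). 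So a wide middle sleeve with two low cuffs genuinely can have less total cuff area than $g_{\max}$, and no unconstrained minimization can close this case. Moreover, beating $g_{\max}$ is the wrong target: you only need to beat the area of $M_0$'s actual cuff $K_M$. You correctly sense that the hard case is the one with two low cuffs and that the spherical isoperimetric theorem must enter, but you leave exactly that step as "the crux" without supplying it.

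The paper's resolution is to use Corollary \ref{bigsleeve} as a constraint \emph{inside} this proof, not merely afterwards: the double-cuffed sleeve $\mathcal{S}_C$ has less area than the larger sleeve $Y$ of $M_0$, hence its complement $\mathcal{S}_C'$ (two polar caps) has more area than the single cap $Y'$; shrinking $\mathcal{S}_C'$ to the area of $Y'$ only decreases its perimeter, and the isoperimetric theorem on the sphere (with uniqueness for caps) then gives that $\mathcal{S}_C'$ has strictly more perimeter than $Y'$. Since the perimeter of $\mathcal{S}_C$ is the total inner perimeter of its two cuffs and the perimeter of $Y$ is the inner perimeter of $K_M$, the two low cuffs have more total inner perimeter than $K_M$; the second clause of Proposition \ref{higherlower} (lower cuffs have more area \emph{per unit of inner perimeter}, via Lemma \ref{tech}) then converts this into the desired area comparison. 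This area-to-perimeter-to-area chain is the missing idea; it also dispenses with your separate handling of the asymmetric-volume regime, and the remaining case (a cuff at least as high as $K_M$, or all antennae pointing one way so that an end sleeve of $C_0$ is smaller than $Y$ and its cuff higher than $K_M$) follows directly from the first clause of Proposition \ref{higherlower}.
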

\begin{proof}
Let $Y$ be the larger of the two sleeves of $M_0$, and $K_M$ its cuff.  We will divide into two cases.

\noindent\emph{Case 1: } All antennae of $C_0$ point to the same side --- all leftward or all rightward.  Since the largest sleeve of $C_0$ is an end sleeve and by Corollary 6.3 must be smaller than $Y$, its cuff must be higher than $K_M$, so by Proposition \ref{higherlower}, we are finished. 

\noindent\emph{Case 2: } Antennae point both ways.  Consider any sleeve $\mathcal{S}_C$ having a leftward and a rightward antenna.  As before, if either cuff of $\mathcal{S}_C$ is at least as high as $K_M$, then it has at least as much surface area, and together with the other cuff we have strictly more area, as desired.

On the other hand, suppose that both cuffs of the sleeve are lower than $K_M$.  If we can show that the two cuffs together have more inner perimeter than does $K_M$, then by Proposition \ref{higherlower}, the total area of the two cuffs will be greater than the area of $K_M$.

By Proposition \ref{bigsleeve}, the sleeve $\mathcal{S}_C$ is smaller than $Y$, so its complement $\mathcal{S}_C'$ is larger than the complement $Y'$ on the unit sphere.  Further, since there is a great sphere disjoint from $\mathcal{S}_C'$, as we reduce the area of $\mathcal{S}_C'$ to match that of $Y'$ we also reduce its total perimeter.  Since $Y'$ is a spherical cap, by the isoperimetric theorem on the sphere, its perimeter is uniquely least for its given area.  We deduce that the perimeter of $Y'$ (and thus of $Y$) is less than that of $\mathcal{S}_C'$ (and thus of $\mathcal{S_C}$).  Now the perimeter of $\mathcal{S}_C$ is the total inner perimeter of its two cuffs, while the perimeter of $Y$ is the inner perimeter of its cuff $K_M$.  By Proposition \ref{higherlower}, the cuffs have more area than $K_M$, as required.
\end{proof}

\section{The main theorem}
\begin{thm} \label{mainthm} Standard weighted double bubbles in $\RRR n$ all minimize weighted surface area among piecewise smooth boundaries of pairs of open regions with prescribed volume.  These minimizers are unique. \end{thm}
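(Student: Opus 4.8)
The plan is to close the argument by contradiction, colliding the two competing estimates that the previous section has prepared: competitors beating the standard must have smaller Gauss images (Corollary \ref{bigsleeve}) yet larger overlap (Proposition \ref{moreover}), and a covering count shows these cannot both hold. Concretely, let $\mu_0$ be the minimum of relative area over $\chi$, realized by a minimizer $C_0$, and suppose toward a contradiction that $\mu_0<1$. By the existence and symmetry results (Proposition \ref{sym}) together with the regularity Propositions \ref{regu} and \ref{existreg}, $C_0$ is a piecewise-smooth constant-mean-curvature surface of revolution meeting in threes, and by Proposition \ref{lessarea} it is nonstandard; let $M_0$ be the standard weighted double bubble with the same volumes and weights.

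First I would record the Gauss-map covering identity. The outer boundary of a bounded bubble cluster is a closed hypersurface, so its outward Gauss map is surjective onto the unit sphere $S^{n-1}$; for a surface of revolution the image is organized by polar angle into the zones we call sleeves, and at each outward-bending singular $(n-2)$-sphere the map doubles back, so that consecutive sleeves overlap exactly in the cuffs. Pairwise inclusion--exclusion in the polar variable then yields, for any such equilibrium, that the total sleeve area equals $\Area(S^{n-1})$ plus the total cuff area. Applied to $M_0$, whose exterior Gauss image is two sleeves sharing a single cuff $K_M$, this reads: total sleeve area of $M_0$ equals $\Area(S^{n-1})+\Area(K_M)$.

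Now I would collide the inequalities. Summing the two per-bubble estimates of Corollary \ref{bigsleeve} gives that the total sleeve area of $C_0$ is at most that of $M_0$; substituting both into the covering identity yields that the total cuff area of $C_0$ is at most $\Area(K_M)$. But Proposition \ref{moreover} produces in $C_0$ a double-cuffed sleeve whose two cuffs together already exceed $K_M$, so the total cuff area of $C_0$ strictly exceeds $\Area(K_M)$, a contradiction. Hence $\mu_0\ge 1$, and since every standard bubble satisfies $\mu(M_\alpha)=1$ by construction of relative area, in fact $\mu_0=1$; equivalently, no competitor satisfies $Q_\alpha(S)<Q_\alpha(M_\alpha)$, so each standard $M_\alpha$ minimizes weighted surface area. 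Because an arbitrary piecewise-smooth minimizer is, by Proposition \ref{sym}, automatically a surface of revolution (after filling any hollow to make the exterior connected, which as noted does not increase relative area), the standard weighted double bubble minimizes among all admissible competitors.

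For uniqueness I would rerun the same collision in the borderline case $\mu_0=1$: Corollary \ref{bigsleeve} still bounds the total sleeve area of any minimizer $C_0$ by that of $M_0$, while the isoperimetric theorem on the sphere underlying Proposition \ref{moreover} is strict for any nonstandard configuration, so the identical arithmetic forces a strict contradiction unless $C_0=M_0$. The step I expect to be the main obstacle is the covering identity itself --- specifically, verifying that the only overlaps among sleeves are the consecutive cuffs, with no triple overlaps or stray multiple coverings, so that the clean inclusion--exclusion count is exact; this is precisely where the surface-of-revolution structure from Proposition \ref{sym} and the triple-junction angle conditions from Proposition \ref{existreg} must be invoked to pin down the monotone polar behavior of the Gauss map on each smooth arc.
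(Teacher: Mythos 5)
Your argument is correct and follows essentially the same route as the paper: a minimizer $C_0$ of relative area with $\mu_0<1$ would, by Corollary \ref{bigsleeve} and Proposition \ref{moreover}, have Gauss-image sleeves too small and cuffs too large for the exterior Gauss map to cover the sphere --- your inclusion--exclusion identity merely makes explicit the covering count the paper leaves implicit (and in fact only the inequality ``total sleeve area $\ge$ sphere area plus total cuff area'' is needed, which follows from multiplicity counting without ruling out triple overlaps, so the obstacle you flag is not a genuine one). The only place the paper does noticeably more work is uniqueness, where after forcing equality of exterior areas, mean curvatures, and the single singular sphere it still argues that the interface of a second minimizer must coincide with that of the standard one; your appeal to strictness of the spherical isoperimetric theorem handles the nonstandard-singular-set case but leaves that final identification of the interface unaddressed.
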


\begin{proof}  Suppose some standard weighted double bubble (Definition \ref{swdb})  is not a minimizer. By Morgan's argument in \cite{begin}, sections 13.4 and 16.2, and by Propositions \ref{sym} and \ref{regu}, there must exist a piecewise smooth surface of revolution having the same volumes and smaller surface area than the standard.  Restrict attention, then, to piecewise smooth surfaces of revolution, and expand the space of competitors to $\chi$ (see Definitions \ref{uspace} and \ref{un}), thus allowing volumes and surface weights to vary.

By Proposition \ref{existreg}, within $\chi$ there exists a minimizer $C_0$ of relative area $\mu$ with $\mu(C_0)=\mu_0$ for some $\mu_0<1$.  Let $M_0$ be the standard weighted double bubble whose volumes and surface weights match those of $C_0$.
 
Now $C_0$ must be a critical point in the space $\chi$.  Boundary critical points have $\mu=1$, so $C_0$ is an interior critical point.

By Propositions \ref{lessarea} and \ref{lesscurvature}, the exterior surface of each bubble of $C_0$ has surface area and mean curvature at most $\mu_0$ times the corresponding value for $M_0$.  
 
By Corollary 6.3, the Gauss image of the exterior of each bubble of $C_0$ has less area than the Gauss image of the corresponding bubble exterior for $M_0$. Moreover, by Proposition \ref{moreover} there is more overlap in the Gauss image of $C_0$ than in the image of $M_0$.  Taken together, these facts would prevent the Gauss image of the exterior of $C_0$ from covering the sphere, a contradiction.
 
To prove uniqueness, now suppose that besides a standard minimizer $M_1$ with certain volumes and weights, there exists another minimizer $M_2$ with the same volumes and weights.  The surface areas and mean curvatures on exterior pieces of $M_2$ cannot exceed the corresponding quantities for $M_1$; otherwise a slight reduction in volume or weight would reduce $\mu$ below 1, which we now know cannot happen.  The singular set on the exterior cannot consist of more than a single $(n-2)$-sphere or by Proposition \ref{moreover} there would be too much Gauss image overlap, and the surface areas and mean curvatures on the exterior of $M_2$ must, in fact, equal those of $M_1$ or the Gauss map on the exterior of $M_2$ could not cover the sphere.   

The exterior of $M_2$ must now be identical to that of $M_1$.  Then the interface surface area and curvature must also be the same in $M_2$ as $M_1$, so that $M_2$ is forced to use the most economical way to connect its singular sphere to the axis, namely the same way that $M_1$ does so.
\end{proof}


\begin{thebibliography}{99}
 
\bibitem[AtoZ]{fo} Alfaro, M., Brock, J., Foisy, J., Hodges, N., Zimba, J.: The standard double soap bubble in $\RRR 2$ uniquely minimizes perimeter. Pacific J. Math, {\bf 159}(1), 47-59 (1993)

\bibitem[HHS]{hhs} Hass, J., Hutchings, M., Schlafly, R.: The double bubble conjecture. Electron. Res. Announc. Amer. Math. Soc. {\bf 1}(3) 98-102 (1995)

\bibitem[HHM]{hhm} Howards, H., Hutchings, M., Morgan, F.: The isoperimetric problem on surfaces.
Amer. Math. Monthly , {\bf 106}(5), 430-439 (1999)

\bibitem[HMRR]{hmrr} Hutchings, M., Morgan, F., Ritore, M., Ros, A.: Proof of the double bubble conjecture.  Ann. of Math. (2)  {\bf 155}(2), 459Ð489 (2002)

\bibitem[Kl]{kl} Kleiner, B.: An isoperimetric comparison theorem. Invent. Math. {\bf 108}(1), 37Ð47 (1992)

\bibitem[M]{begin} Morgan, F.: Geometric Measure Theory: A Beginner's Guide. Fourth edition. Elsevier/Academic Press, Amsterdam, 2009

\bibitem[R]{br} Reichardt, B.: Proof of the double bubble conjecture in $\RRR  n$.
   J. Geom. Anal. {\bf 18}(1), 172-191 (2008)
    
\bibitem[W1]{whi1} White, B.: Regularity of the singular sets in immiscible fluid interfaces and solutions to other Plateau-type problems. Proc. Centre Math. Anal. Austral. Nat. Univ. {\bf 10}, 244-249 (1985)

\bibitem[W2]{whi2} White, B.: Existence of least-energy configurations of immiscible fluids.  J. Geom. Anal. {\bf 6}(1), 151-161 (1991)
   
 \end{thebibliography}
 \end{document}